\documentclass[11pt,reqno]{amsart}
\usepackage{a4wide}

\usepackage{amsthm, amssymb, amstext, amscd, amsfonts, amsxtra, latexsym, amsmath, comment, tikz,
	mathrsfs, mathtools, esint, stmaryrd, cancel, multicol}
\usepackage{color}
\usepackage{fancybox}
\usepackage{fullpage}
\usepackage[english]{babel}
\usepackage[latin1]{inputenc}
\usepackage{enumitem}
\parskip 1ex

%\usepackage{refcheck}

%%%%%%%%%%%%%%%%%   Newtheorem   %%%%%%%%%%%%%%%%%%

\numberwithin{equation}{section}
\newtheorem{theorem}{Theorem}
\numberwithin{theorem}{section}

\newtheorem{proposition}[theorem]{Proposition}

\newtheorem*{conjecture}{Conjecture}
\theoremstyle{remark}
\newtheorem*{remark}{Remark}

\theoremstyle{definition}

%%%%%%%%%%%%%%%%%%  Newcommand  %%%%%%%%%%%%%%%%%%

\setlength{\parskip}{0pt}

\newcommand{\Z}{\mathbb{Z}}
\newcommand{\N}{\mathbb{N}}

\newcommand{\R}{\mathbb{R}}

\newcommand{\z}{\mathfrak{z}}

\makeatletter
\newcommand{\vast}{\bBigg@{4}}
\newcommand{\Vast}{\bBigg@{5}}
\makeatother

\renewcommand{\b}[1]{\boldsymbol{#1}}

%\renewcommand{\pmod}[1]{\  \,  \left( \rm{mod} \,  #1 \right)}

%%%%%%%%%%%%%%%%%%%%%%%%%%%%%%%%%%%%%%%%%%%%%

\renewenvironment{proof}[1][Proof]{\begin{trivlist} \item[\hskip \labelsep {\bfseries #1:}]}{\qed\end{trivlist}}

%%%%%%%%%%%%%%%%%%%%%%%%%%%%%%%%%%%%%%%%%%%%%%%%%%%%%%%%%%%%%%%%%%%%%%%%%%%%%%%
%%%%%%%%%%%%%%%%%%%%%%%%%%%%%%%%%%%%%%%%%%%%%%%%%%%%%%%%%%%%%%%%%%%%%%%%%%%%%%%
%%%%%%%%%%%%%%%%%%%%%%%%%%%%%%%%%%%%%%%%%%%%%%%%%%%%%%%%%%%%%%%%%%%%%%%%%%%%%%%

\author[K. Bringmann]{Kathrin Bringmann}
\author[C. Jennings-Shaffer]{Chris  Jennings-Shaffer}
\author[K. Mahlburg]{Karl Mahlburg}

\address{Mathematical Institute, University of Cologne, Weyertal 86-90, 50931 Cologne, Germany}
\email{kbringma@math.uni-koeln.de}

\address{Mathematical Institute, University of Cologne, Weyertal 86-90, 50931 Cologne, Germany}
\email{cjenning@math.uni-koeln.de}

\address{Department of Mathematics, Louisiana State University, Baton Rouge, LA 70803, USA}
\email{mahlburg@math.lsu.edu}

\title{Proofs and reductions of various conjectured partition identities of Kanade and Russell}
\allowdisplaybreaks
\begin{document}
\begin{abstract}
We prove seven of the Rogers-Ramanujan type identities  modulo $12$ that were conjectured by Kanade and Russell. Included among these seven are the two original modulo $12$ identities, in which the products have asymmetric congruence conditions, as well as the
three symmetric identities related to the principally specialized characters of certain level $2$ modules of $A_9^{(2)}$. We also give reductions of
four other conjectures in terms of single-sum basic hypergeometric
series.
\end{abstract}

\allowdisplaybreaks
\maketitle

\section{Introduction and Statement of Results}

The study of so-called ``sum-product'' identities for hypergeometric $q$-series has a long and rich history, with deep connections to the theory of integer partitions, modular forms, and affine Lie algebras. One of the earliest and most notable examples are the Rogers-Ramanujan identities \cite{RR19}, which are written in an analytic form as
 \begin{equation}
\label{E:RR}
\sum_{n \geq 0} \frac{q^{n^2}}{(q;q)_n} = \frac{1}{\left(q,q^4; q^5\right)_\infty}, \qquad \qquad
\sum_{n \geq 0} \frac{q^{n^2+n}}{(q;q)_n} = \frac{1}{\left(q^2,q^3; q^5\right)_\infty}.
\end{equation}
Here we define for $n\in\mathbb{N}_0\cup\{\infty\}$, $m\in\N_0$, and $x,q\in\mathbb{C}$ with $|q|<1$,
\begin{align*}
(x;q)_n &:= \prod_{i=0}^{n-1}\left(1-xq^i\right)
,&
(x_1,x_2,\dotsc,x_m;q)_n &:= (x_1;q)_n(x_2;q)_n\dotsb(x_m;q)_n.
\end{align*}

The identities in \eqref{E:RR} may also be interpreted combinatorially as identities between the enumeration functions for integer partitions. For example, the first identity in \eqref{E:RR} equivalently states that the number of partitions of $n$ where
successive parts differ by at least $2$ is the same as the number of
partitions of $n$ where each part is congruent to $1$ or $4$ modulo
$5$. As such, identities of this shape are also called ``gap-product'' identities, where the sum side enumerates partitions which satisfy certain restrictions on the differences between parts, and the product side enumerates partitions whose parts are restricted to lie in certain residue classes.

The Rogers-Ramanujan identities play a major role in the theory of hypergeometric $q$-series, as well as the combinatorial and analytic theory of partitions, as a large number of deep techniques have been developed in order to prove a vast collection of generalized sum-product identities. This includes the direct generalizations of \eqref{E:RR} due to Andrews \cite{Andrews3}, Bressoud \cite{Bre}, and Gordon \cite{Gordon1}, as well as Slater's lengthy lists of similar identities in \cite{Slater1,Slater2}.

Inspired by such results, as well as similar identities mentioned below (notably Capparelli's work in \cite{Capparelli1,Capparelli2}), Kanade and Russell conducted
an extensive search for new gap-product identities in \cite{KR14}. Their method was to explicitly
construct partitions satisfying three types of conditions. The first condition
being on the smallest part, both the smallest part allowed and
how many times this allowed part can appear. The second
being a difference at a distance condition,
meaning a requirement that the
difference between the parts $\pi_i$ and $\pi_{i+k}$ is at least $d$,
for fixed $k$ and $d.$
The third being a congruence at a distance condition,
meaning that if the difference
between parts $\pi_i$ and $\pi_{i+k}$ is at most $d$,
then the sum of successive parts
$\pi_i+\pi_{i+1}+\dotsb +\pi_{i+k}$ is congruent to $r$ modulo $m$,
for fixed $k$, $d$, $r$, and $m$. Kanade and Russell then calculated all such partitions over a wide range of possible search parameters, and used Euler's algorithm to determine when the resulting series is equivalent to a simple infinite product. In the end, Kanade and Russell found a total of six conjectural identities in \cite{KR14}; below we state the two conjectures with restrictions modulo $12$ (the remaining four conjectures involved the modulus $9$).

An entirely different approach was introduced by Lepowsky and Wilson's seminal work  \cite{LW78}, where they introduced vertex operators as a method for explicitly constructing affine Lie algebras, as well as calculating the standard modules of such algebras. This construction was generalized to $Z$-algebras \cite{LW81}. We briefly recall some basic definitions from the theory of affine Lie algebras (see \cite{Kac} for more details and standard notation, some of which we use below). If $\mathfrak{g}$ is an affine Lie algebra, and $\lambda$ is a dominant integral weight for $\mathfrak{g}$, then there exists a unique irreducible, integral, highest weight module $L(\lambda)$.
Remarkably, sum-product identities such as \eqref{E:RR} then arise by calculating the principally specialized character $\chi(L(\lambda))$ in two different ways: the product side uses the Kac-Weyl character formula and Lepowsky and Milne's ``numerator formula'' \cite{LM78}, while the partition gap conditions for the sum side are calculated using Lepowsky and Wilson's vertex operator algebra and $Z$-algebra programs for the explicit construction of highest weight modules \cite{LW78,LW82,LW84}. Furthermore, the character for $L(\lambda)$ is completely determined by the character of its associated ``vacuum space'' $\Omega(L(\lambda))$ (which consists of all of the highest weight vectors in $L(\lambda)$ for the Heisenberg subalgebra).

Indeed, Lepowsky and Wilson's construction of the standard modules of $A^{(1)}_1$ in \cite{LW84, LW85},
along with the work of Meurman and Primc \cite{MP87}, results in formulas that recover all of the generalized Rogers-Ramanujan identities in \cite{Andrews3, Bre, Gordon1}.
Lepowsky and Milne \cite{LM78} also showed that the Rogers-Ramanujan identities arise in character formulas for the level 2 standard modules for $A^{(2)}_2$, which was later proven using $Z$-algebras by Capparelli \cite{Capparelli1}.
Capparelli additionally used $Z$-algebras to construct the level 3 standard modules for $A^{(2)}_2$, which relied on the discovery of two conjectural partition identities (which were first proven by Andrews \cite{And94}, and later by Capparelli \cite{Capparelli2}; an analytic sum-side for Capparelli's identities was recently given by \cite{DL}).
These identities were a significant development in the theory of vertex operator algebras, as they were the first notable examples of sum-product identities that had not previously appeared, but were instead discovered using vertex-operator-theoretic techniques.

In \cite{KR18}, Kanade and Russell gave more identities of this flavor.
However, rather than searching based on partition gap conditions, they instead began their search from Bos' formulas for the level 2 principally specialized characters of $A_9^{(2)}$, namely
\begin{align}
\label{E:BosH1}
\chi(\Omega(L(\Lambda_0 + \Lambda_1)))
&= \frac{1}{\left(q, q^4, q^6, q^8, q^{11}; q^{12}\right)_\infty}, \\
\label{E:BosH2}
\chi(\Omega(L(\Lambda_3)))
&= \frac{\left(q^6; q^{12}\right)_\infty}{\left(q^2, q^3, q^4, q^8, q^9, q^{10}; q^{12}\right)_\infty}, \\
\label{E:BosH3}
\chi(\Omega(L(\Lambda_5)))
&= \frac{1}{\left(q^4, q^5, q^6, q^7, q^8; q^{12}\right)_\infty}.
\end{align}
Equation \eqref{E:BosH1} is Theorem 7.3 in \cite{Bos}, and \eqref{E:BosH2} and \eqref{E:BosH3} are stated in Conjecture 7.1 of \cite{Bos}. These level 2 characters of $A_9^{(2)}$ were a natural starting point to search for new identities, as the level 2 characters of $A_3^{(2)}, A_5^{(2)},$ and $A_7^{(2)}$ all correspond to known partition identities (this is further explained in \cite{Bos} and \cite{KR18}).

Kanade and Russell found three corresponding partition identities for \eqref{E:BosH1}, \eqref{E:BosH2}, and \eqref{E:BosH3}, and also discovered several additional asymmetric companions. Furthermore, they provided explicit formulas for the analytic sum sides of their conjectures, which are the generating functions for the partitions involved in the
identities. They also gave the sum sides corresponding to their original modulo $12$
conjectures, whereas in \cite{KR14} they only gave the conditions to describe
the relevant partitions. Kur{\c s}ung{\"o}z \cite{Ku18} recently
gave sum sides for their modulo $9$ conjectures and alternative
sum sides for their original modulo $12$ conjectures.

In the present paper we consider the majority of the conjectures from \cite{KR18}.
We state these in an analytic form, as that is the most convenient formulation for our later calculations. For $1 \leq \ell \leq 9$ the series are given by
\begin{equation}
\label{E:H1-9def}
H_\ell(x;q) := \sum_{i,j,k\geq 0} (-1)^k \frac{q^{(i+2j+3k)(i+2j+3k-1) + 3k^2 + A_\ell(i,j,k)}}{(q;q)_i\left(q^4;q^4\right)_j\left(q^6;q^6\right)_k}x^{i+2j+3k},
\end{equation}
where the $A_\ell(i,j,k)$ are linear polynomials given as follows,
\begin{multicols}{3}
\noindent $A_1(i,j,k) := i + 6j + 6k,$  \\
$A_4(i,j,k) := i + 3j + 3k$, \\
$A_7(i,j,k) := 2i + 4j + 6k$, \\
$A_2(i,j,k) := 2i + 2j + 6k,$  \\
$A_5(i,j,k) := 2i - j + 3k$, \\
$A_8(i,j,k) := i + j + 3k$, \\
$A_3(i,j,k) := 4i + 6j + 12k,$  \\
$A_6(i,j,k) := i$, \\
$A_9(i,j,k) := 3i + 5j + 9k$.
\end{multicols}

Furthermore, for $\ell \in \{10, 11\}$ we define
\begin{equation*}
H_{\ell}(x;q) :=
	\sum_{i,j,k\geq 0} \frac{ q^{\frac12(i+2j+3k)(i+2j+3k-1)+j^2+A_\ell(i,j,k)}}{(q;q)_i\left(q^2;q^2\right)_j\left(q^3;q^3\right)_k}x^{i+2j+3k},
\end{equation*}
where
\begin{align*}
A_{10}(i,j,k) := i + 2j + 4k,\qquad\qquad A_{11}(i,j,k) := 2i + 4j + 5k.
\end{align*}

\begin{conjecture}[Kanade-Russell \cite{KR14,KR18}]
The series are equal to the following products:
\begin{align}
\label{C:H1}
H_1(1) &= \frac{1}{\left(q,q^4,q^6,q^8,q^{11};q^{12}\right)_\infty}
,\\
\label{C:H2}
H_2(1) &= \frac{\left(q^6;q^{12}\right)_\infty}{\left(q^2,q^3,q^4;q^{6}\right)_\infty}
,\\
\label{C:H3}
H_3(1) &= \frac{1}{\left(q^4,q^5,q^6,q^7,q^8;q^{12}\right)_\infty}
,\\
\label{C:H4}
H_4(1) &= \frac{1}{\left(q;q^4\right)_\infty\left(q^4,q^{11};q^{12}\right)_\infty}
,\\
\label{C:H5}
H_5(1) &= \frac{1}{\left(q;q^4\right)_\infty\left(q^7,q^{8};q^{12}\right)_\infty}
,\\
\label{C:H6}
H_6(1) &= \frac{\left(q^3;q^{12}\right)_\infty}{\left(q,q^2;q^{4}\right)_\infty}
,\\
\label{C:H7}
H_7(1) &=\frac{\left(q^9;q^{12}\right)_\infty}{\left(q^2,q^3;q^{4}\right)_\infty}
,\\
\label{C:H8}
H_8(1) &= 	 \frac{1}{\left(q^3;q^4\right)_\infty\left(q,q^{8};q^{12}\right)_\infty}
,\\
\label{C:H9}
H_9(1) &= 	 \frac{1}{\left(q^3;q^4\right)_\infty\left(q^4,q^{5};q^{12}\right)_\infty}
,\\
\label{C:H10}
H_{10}(1) &= \frac{1}{\left(q;q^3\right)_\infty \left(q^3,q^6,q^{11};q^{12}\right)_\infty}
,\\
\label{C:H11}
H_{11}(1) &= \frac{1}{\left(q^2;q^3\right)_\infty\left(q^3,q^6,q^7;q^{12}\right)_\infty}
.
\end{align}
\end{conjecture}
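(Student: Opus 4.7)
My plan is to work with the full $x$-deformations $H_\ell(x;q)$, reduce each triple sum to a single basic hypergeometric series, and for seven of the eleven cases match this single sum against the target infinite product via a classical identity. The first step is to derive $q$-difference equations for the $H_\ell(x;q)$. Splitting each of the summation indices $i$, $j$, $k$ into its $0$-term and its positive branch (with the latter re-indexed by subtracting $1$) produces relations of the schematic form
\[
H_\ell(x;q) \;=\; H_{m}\!\left(xq^{a};q\right) \;+\; (\text{monomials in } x,q)\cdot H_{n}\!\left(xq^{b};q\right),
\]
where the particular shifts $a,b$ and indices $m,n$ depend on the linear polynomial $A_\ell(i,j,k)$. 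Because the $A_\ell$ are linear, the shifts compose coherently, and the resulting system of relations should close within the family $\{H_\ell\}_{1\leq\ell\leq 11}$.

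Next I would iterate and telescope these equations to eliminate two of the three summation variables, producing for each $\ell$ a single-sum representation
\[
H_\ell(x;q) \;=\; \sum_{n\geq 0} \frac{x^n q^{Q_\ell(n)}}{(q;q)_n}\,R_\ell(x,q),
\]
where $Q_\ell$ is quadratic in $n$ and $R_\ell$ is a finite product of Pochhammer factors. This alone already supplies the four promised reductions in the abstract. For the seven identities we actually intend to prove --- notably the symmetric triplet \eqref{C:H1}, \eqref{C:H2}, \eqref{C:H3} tied to Bos' characters \eqref{E:BosH1}--\eqref{E:BosH3} --- I would then specialize $x=1$ and match the single sum against a classical evaluation: an entry in Slater's list, a Bailey pair identity, or a consequence of Jacobi's triple product after a suitable change of variable. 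As an independent sanity check, one can verify that the product sides themselves satisfy simple $q$-difference equations under $x \mapsto xq^{12}$, so that uniqueness of a formal power series solution with prescribed constant term would finish.

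The principal obstacle is deriving the right $q$-difference system in the first step. Finding the correct sequence of splittings and re-indexings so that the functional equations actually close into a workable finite system requires a delicate case-by-case analysis, since different linear terms $A_\ell(i,j,k)$ couple to different $H_m$ under the shifts $x\mapsto xq^a$. A secondary obstruction is that four of the resulting single sums apparently do not fit any classical evaluation --- precisely the dichotomy encoded in the abstract: seven full proofs and four reductions. Identifying, at the outset, which single sums admit a closed product form and which do not will guide the strategy for each individual $\ell$, and ideally explains combinatorially why the three ``Bos'' cases are in the tractable collection.
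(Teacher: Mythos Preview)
Your proposal is a strategy sketch rather than a proof, and several of its concrete claims are incorrect or do not make sense as stated.

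First, the ``index splitting'' step does not produce a closed system inside $\{H_\ell\}_{1\le\ell\le 11}$. Peeling off $i=0$ (or $j=0$, or $k=0$) leaves a double sum that is not of the form $H_m(xq^a;q)$: the quadratic $3k^2$ in the exponent, and the denominators $(q^4;q^4)_j$, $(q^6;q^6)_k$, do not transform into a mere $x$-shift under $j\mapsto j+1$ or $k\mapsto k+1$. The paper obtains $q$-difference equations by an entirely different route: it writes $H_\ell(x)=\sum_{N\ge 0} h_{c,d,N}\,q^{N^2+mN}x^N$ for suitable $c,d,m$, proves three- and four-term linear recurrences for the coefficients $h_{c,d,N}$ (Proposition~\ref{P:hNcdrec}), and reads off a self-contained functional equation for each $H_\ell$ individually. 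No coupling between different $H_\ell$'s is used (except that $H_2$ is deduced from $H_1$ and $H_3$ via a relation already in Kanade--Russell).

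Second, your promised single-sum shape $\sum_n x^n q^{Q_\ell(n)}/(q;q)_n\cdot R_\ell(x,q)$ does not match what actually emerges. The reductions in Theorem~\ref{Theorem2} have denominators like $(q,q^2;q^2)_n(q^{2b+a-2};q^4)_n$ and numerators $(q^{3b-6};q^6)_n$; they arise from the general Proposition~\ref{Prop1}, whose proof requires Appell's Comparison Theorem to pass from the two-variable functional equation to the value at $x=1$, together with a Heine transformation. For $H_{10}$ and $H_{11}$ the mechanism is different again: one first sums out $i$ via Euler, then proves a recurrence for the remaining double sum by the $q$-Zeilberger algorithm, applies Proposition~\ref{Prop2}, and evaluates the resulting ${}_2\phi_1$ by Bailey--Daum. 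The product evaluations for $H_1$ and $H_3$ are not ``entries in Slater's list'' but depend on two specific identities of McLaughlin--Sills.

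Finally, your uniqueness argument via ``product sides satisfy $q$-difference equations under $x\mapsto xq^{12}$'' is ill-posed: the product sides contain no $x$. The actual uniqueness input in the paper is that each recurrence for $h_{c,d,N}$, with the initial conditions $h_{c,d,N}=0$ for $N<0$ and $h_{c,d,0}=1$, determines the sequence uniquely.
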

We note that in some cases we slightly rewrite the product side of these conjectures.
The conjectures for $H_{10}(1)$ and $H_{11}(1)$ are the original modulo $12$
conjectures from \cite{KR14}, and the conjectures
for $H_1(1)$, $H_2(1)$, $H_3(1)$, $H_4(1)$, $H_5(1)$, $H_6(1)$, $H_7(1)$, $H_8(1)$, and $H_9(1)$
are respectively identities $1$, $2$, $3$, $4$, $4a$, $5$, $5a$, $6$, and $6a$ of \cite{KR18}.
As alluded to above, the conjectures for $H_{1}(1)$, $H_{2}(1)$, and
$H_{3}(1)$ arose from the principally specialized characters for $A_9^{(2)}$ listed in \eqref{E:BosH1}, \eqref{E:BosH2}, and \eqref{E:BosH3}. The conjectures for $H_4(1)$ through $H_9(1)$
are asymmetric companions to the conjectures for $H_1(1)$, $H_2(1)$, and $H_3(1)$.

\begin{remark}
It is not surprising that the infinite products in \eqref{C:H1}, \eqref{C:H2}, and \eqref{C:H3} are symmetric (in the sense that they consist of factors of the form $(1-q^n)^{\pm 1}$ for all $n$ in certain residue classes $\pm r$ modulo $12$. Indeed, this follows from the Lepwosky-Milne numerator formula combined with the fact that in a finite-dimensional simple lie algebra the roots always occur in symmetric pairs $(\alpha, -\alpha)$. Furthermore,
Kac and Peterson \cite{KP} showed that these products are essentially modular functions on certain congruence subgroups, which are expressible as the quotient of theta functions. However, it is striking that all of the remaining conjectures contain asymmetric products, which have occurred infrequently in the classical theory of partitions (for example, G\"ollnitz's so-called Big Theorem \cite{Gollnitz}).
\end{remark}

Kanade and Russell also gave combinatorial interpretations for the sum-sides of each of these conjectures. For example, the sum-side for $H_{1}(1)$ generates all
partitions such that if $\pi_i$ is a part then $\pi_i+1$ is not a part, odd parts do not repeat,
and if $\pi_i = \pi_{i+1}$ are even, then $\pi_i - \pi_{i-1} \geq 4$ and $\pi_{i+2}-\pi_i\ge4$.
The product side generates the partitions where each part is
congruent to $1$, $4$, $6$, $8$, or $11$ modulo $12$, and the conjectured identity is then equivalent to the statement that for all $n$ there are an equal number of partitions of both types.
The remaining conjectures are similar, and were discussed in detail
in \cite{KR18}. These interpretations are not immediately apparent from
the series \eqref{E:H1-9def}, but instead require a careful combinatorial analysis in which a partition satisfying difference conditions is decomposed into a staircase and jagged component.

In this article, we provide proofs of some of these conjectures and reductions
of others. In particular, we prove the following.
\begin{theorem}\label{Theorem1}
Conjectures \eqref{C:H1}, \eqref{C:H2}, \eqref{C:H3}, \eqref{C:H6}, \eqref{C:H7},
\eqref{C:H10}, and \eqref{C:H11} are true.
\end{theorem}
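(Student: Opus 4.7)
I would attack the seven identities by introducing $x$ as a working parameter and deriving a system of $q$-difference equations for the $H_\ell(x;q)$. The template is the classical Rogers--Ramanujan proof: show that the sum side satisfies a multi-term $q$-recursion in $x$, expand the product side via the Jacobi triple product, and match initial conditions. The extra feature here is that $H_\ell(x;q)$ is a triple sum, so several nested recursions have to be chained together.

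Concretely, I would first split off the $i=0$ term and shift $i\mapsto i+1$ to produce a relation of the form $H_\ell(x;q) = H_\ell(xq;q) + x\,(\text{error})$, where the error is again a triple sum; then repeat the manoeuvre on the $j$-summation (using base $q^4$) and on the $k$-summation (using base $q^6$). The alternating factor $(-1)^k q^{3k^2+\cdots}$ appearing in $H_1,\dots,H_9$ is ideally suited to a Jacobi triple product application, which converts the inner $k$-sum into a theta factor. If the shifts are chosen so that the remaining error terms are themselves of the form $H_m(xq^\bullet;q)$ for other indices $m\in\{1,\dots,11\}$, the system closes on a finite number of unknowns, and Theorem~\ref{Theorem1} reduces to verifying algebraic identities among $q$-Pochhammer products.

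For the symmetric cases \eqref{C:H1}, \eqref{C:H2}, \eqref{C:H3}, the right-hand sides are the principally specialized characters of Bos displayed in \eqref{E:BosH1}--\eqref{E:BosH3}; once the triple sum has been reduced to a theta-type expression, the product side follows directly from Bos's formulas together with a Jacobi triple product. For the asymmetric cases \eqref{C:H6}, \eqref{C:H7}, \eqref{C:H10}, \eqref{C:H11}, there is no single modular quotient to aim at, so after the $q$-difference reduction I would recognise the resulting single sum as a Bailey- or Slater-type identity, or verify it by a finite induction on the surviving summation variable.

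The main obstacle is the delicate bookkeeping needed to ensure that the $q$-difference recursions close on exactly the proven $H_\ell$. The four companions $H_4, H_5, H_8, H_9$ which the paper leaves only as single-sum reductions presumably do not drop out of the same system, so the shifts must be chosen carefully to avoid producing terms that reduce to those four series. A secondary challenge is handling the $(-1)^k$ and $q^{3k^2}$ factors in the right order so that the Jacobi triple product can be invoked cleanly at the stage when the $k$-variable becomes free, rather than after it has been mixed with the shifts of $i$ and $j$.
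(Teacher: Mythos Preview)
Your proposal is a strategy sketch rather than a proof, and several of the suggested steps would not go through as stated.

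First, the idea of applying the Jacobi triple product to the inner $k$-sum is problematic: the $k$-sum in \eqref{E:H1-9def} is one-sided ($k\ge 0$) and carries the denominator $(q^6;q^6)_k$, so it is not of the bilateral form $\sum_{k\in\Z}(-1)^k q^{3k^2+\cdots}$ to which the triple product applies. No such step occurs in the paper, and I do not see how to make it work without first eliminating the factor $(q^6;q^6)_k$, which is essentially the whole difficulty.

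Second, the paper does not produce a coupled system linking different $H_m$; each $H_\ell$ (for $1\le\ell\le 9$) satisfies its \emph{own} three-term $q$-difference equation in $x$. The mechanism is to write $H_\ell(x)=\sum_{N\ge0} h_{c,d,N}\,q^{N^2+mN}x^N$ with $N=i+2j+3k$, and then to prove a three-term recurrence in $N$ for the coefficients $h_{c,d,N}$ (Proposition~\ref{P:hNcdrec}) by linear algebra on a handful of simple shift relations. The separate shifts in $i$, $j$, $k$ you describe would at best reproduce those elementary relations, but do not by themselves close the system; the nontrivial content is the elimination step that collapses a generic four-term recurrence to three terms exactly when $(c,d)$ lies on one of three specific one-parameter families. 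There is no need to ``avoid'' $H_4,H_5,H_8,H_9$: they satisfy the same kind of recurrence, it simply does not evaluate to a product at the final stage.

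Third, Bos's formulas \eqref{E:BosH1}--\eqref{E:BosH3} \emph{are} the product sides of \eqref{C:H1}--\eqref{C:H3}; they are the target, not an input. Once the $q$-difference equation is in hand, the paper solves it by a chain of auxiliary series and Appell's Comparison Theorem (Proposition~\ref{Prop1}), reducing $H_\ell(1)$ to a single basic-hypergeometric sum. That sum collapses to a product automatically when a parameter $a=0$ (giving $H_6$ and $H_7$), and is otherwise identified via known single-sum identities of McLaughlin--Sills (for $H_1$ and $H_3$; $H_2$ then follows from these two by a result already in Kanade--Russell). For $H_{10}$ and $H_{11}$ the paper first sums out the $i$-variable with Euler's identity, proves a recurrence for the remaining double sum by the $q$-Zeilberger algorithm (Proposition~\ref{Prop2}), and evaluates the resulting ${}_2\phi_1$ with the Bailey--Daum summation. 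Your outline does not supply any of these specific evaluation tools, and they are where the actual work lies.
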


This result is the first time that any of Kanade and Russell's conjectures in \cite{KR14} and \cite{KR18} are proven. Our proofs use a variety of techniques, including summation and transformation formulas for hypergeometric $q$-series, series solutions to $q$-difference equations, and linear recurrences. The cases \eqref{C:H1}, \eqref{C:H2}, and \eqref{C:H3} are perhaps the most significant, as these partition identities involve the principally specialized characters for $A_9^{(2)}$, and are therefore likely to be useful in verifying the vertex operator construction of the corresponding standard modules (cf. \cite{Kan18}).

Although we have not yet fully proven \eqref{C:H4}, \eqref{C:H5}, \eqref{C:H8}, or \eqref{C:H9}, the following result reduces the ``sum-sides'' from \eqref{E:H1-9def} to expressions involving a single hypergeometric $q$-series in these cases.
\begin{theorem}
\label{Theorem2}
The following
identities are true:
\begin{align}
\label{T:H4}
H_4(1) &=
	\left(q^3;q^4\right)_\infty
	\sum_{n\ge0}\frac{\left(q^{3};q^6\right)_n  q^{ n^2} }
	{(q,q^2;q^2)_{n}  \left(q^{3};q^4\right)_n }
,\\
\label{T:H5}
H_5(1) &=
	\left(q^{-1};q^4\right)_\infty
	+
	\left(q^3;q^4\right)_\infty
	\sum_{n\ge0}
	\frac{\left(1+q^{2n-4}+q^{2n-1} \right)  \left(q^{-3};q^6\right)_n  q^{ n^2 + 4n+3} }
	{ \left(1-q^{2n+2}\right) \left(q^{-1},q^2;q^2\right)_{n}  \left(q^3;q^4\right)_n}
,\\
\label{T:H8}
H_8(1) &=
	\left(q;q^4\right)_\infty
	+
	\left(q^5;q^4\right)_\infty
	\sum_{n\ge0}
	\frac{\left(1+q^{2n}+q^{2n+1} \right) \left(q^{3};q^6\right)_n q^{ n^2 + 2n+1}  }
	{\left(1-q^{2n+2}\right) \left(q,q^2;q^2\right)_{n}  \left(q^5;q^4\right)_n }
,\\
\label{T:H9}
H_9(1) &=
	\left(q^5;q^4\right)_\infty
	\sum_{n\ge0}
	\frac{  \left(q^{3};q^6\right)_n  q^{ n^2+2n}}
	{\left(q,q^2;q^2\right)_{n}  \left(q^{5};q^4\right)_n }
.
\end{align}
\end{theorem}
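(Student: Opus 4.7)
My plan is to reduce each triple sum $H_\ell(x;q)$ for $\ell \in \{4,5,8,9\}$ to a single sum by deriving $q$-difference equations in the auxiliary variable $x$ and iterating them, specializing to $x = 1$ at the end. This approach is consistent with the ``series solutions to $q$-difference equations'' technique the authors announce for Theorem~\ref{Theorem1}.

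First, for each $\ell$ I would derive a $q$-difference equation by isolating the terms of the triple sum \eqref{E:H1-9def} according to whether one of the indices (say $i$, or possibly $k$) vanishes. Shifting the surviving indices down by one and absorbing the resulting power of $x q^{i+2j+3k}$ through the substitution $x \mapsto xq^a$ should produce a recurrence of the schematic form
\[
H_\ell(x;q) \;=\; U_\ell(x,q)\, H_\ell(xq;q) \;+\; V_\ell(x,q)\, H_\ell(xq^2;q),
\]
for explicit rational functions $U_\ell, V_\ell$ of $x$ and $q$. The precise shape of $U_\ell$ and $V_\ell$ is dictated by the linear polynomial $A_\ell(i,j,k)$, which explains why the four cases---despite having closely related triple sums---lead to qualitatively distinct closed forms.

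Iterating this recurrence $N$ times then expresses $H_\ell(x;q)$ as a linear combination of $H_\ell(xq^{N+1};q)$ and $H_\ell(xq^{N+2};q)$ with coefficients that are finite products and partial sums. Because $H_\ell(xq^n;q) \to 1$ as $n \to \infty$ for $|q|<1$, taking $N \to \infty$ collapses the expression to a single infinite sum, possibly augmented by an explicit boundary constant. For $\ell = 4$ and $\ell = 9$ I anticipate that the boundary contribution vanishes, yielding the clean single-sum identities \eqref{T:H4} and \eqref{T:H9}. For $\ell = 5$ and $\ell = 8$, the asymmetry in $A_5$ and $A_8$ should force a non-zero boundary term, which would account for the isolated infinite products $(q^{-1};q^4)_\infty$ and $(q;q^4)_\infty$ appearing in \eqref{T:H5} and \eqref{T:H8}.

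The main obstacle is picking out the correct $q$-difference equation among many candidates: one needs a splitting whose coefficients are simple enough to iterate in closed form, and that produces a recurrence that collapses to a single sum rather than generating a proliferating sequence of sums. A secondary technical point, most visible for \eqref{T:H5} and \eqref{T:H8}, is that their summands contain the asymmetric prefactors $1 + q^{2n-4} + q^{2n-1}$ and $1 + q^{2n} + q^{2n+1}$; these almost certainly arise by combining pairs of consecutive iteration steps into a single summand, and matching them against the iterated recurrence will require careful manipulation of $q$-Pochhammer identities such as $(q;q^2)_n (q^2;q^2)_n = (q;q)_{2n}$ together with standard relations between $(q^{\pm 3};q^6)_n$ and $(q^3;q^4)_n$.
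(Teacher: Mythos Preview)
Your proposal has the right spirit---the paper does proceed via $q$-difference equations in $x$---but the concrete mechanism you describe does not match what is actually required, and the gap is substantive. Because $x$ enters $H_\ell$ only through $x^{i+2j+3k}$ and the quadratic exponent is $N(N-1)$ with $N=i+2j+3k$, the natural shifts are $x\mapsto xq^2$, not $x\mapsto xq$. The paper writes $H_\ell(x)=\sum_N h_{c,d,N}\,q^{N^2+mN}x^N$ and proves (Proposition~\ref{P:hNcdrec}) that the inner coefficients $h_{c,d,N}$ satisfy a genuine \emph{three}-term recurrence, yielding functional equations of the shape $H_\ell(x)=(\cdots)H_\ell(xq^2)+(\cdots)H_\ell(xq^4)+(\cdots)H_\ell(xq^6)$; establishing even this already requires solving a linear system of eight shift relations, and splitting off $i=0$ or $k=0$ in the triple sum does not produce it directly. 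A three-term recurrence of this kind cannot be ``iterated to a single sum'' in the way you outline---iteration produces a branching tree of terms. Instead the paper's Proposition~\ref{Prop1} performs a long chain of substitutions: divide by $(x;q^2)_\infty$, renormalize coefficients twice, reach an auxiliary series whose functional equation can be solved exactly as a product, re-expand via Euler's identities, and apply Heine's transformation \eqref{E:heine} to collapse back to a single sum. The evaluation at $x=1$ is also not a naive limit: dividing by $(x;q^2)_\infty$ creates a pole, and $H_\ell(1)$ is recovered via Appell's Comparison Theorem (Proposition~\ref{P:Appell}).

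Your explanation of the extra constants in \eqref{T:H5} and \eqref{T:H8} is likewise off. They are not boundary residues of an iteration. The functional equations for $H_5(x)$ and $H_8(x)$ carry a factor $(1-xq^2)$ where Proposition~\ref{Prop1} demands $(1-xq^4)$; attempting to apply the proposition directly would force dividing by $(xq^{-2};q^2)_\infty$, after which Appell's theorem fails because the radius of convergence drops below~$1$. The paper instead writes $H_5(x)=J_5(x)+xq^3H_5(xq^2)$ and $H_8(x)=J_8(x)+xqH_8(xq^2)$, checks that each piece now satisfies a functional equation fitting the template, applies Proposition~\ref{Prop1} separately, and combines; the isolated products $(q^{-1};q^4)_\infty$ and $(q;q^4)_\infty$ are simply the $n=0$ terms split off after a final index shift in the combined sum.
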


The rest of the article is organized as follows. We begin in Section \ref{S:Prel} by recalling useful summation and transformations for hypergeometric $q$-series. The section continues with some general results for finding series solutions to $q$-difference equations, as well as results on linear recurrences for $q$-series. In Section \ref{S:H1} we use similar techniques to prove the conjecture for $H_1$. Next, we apply the general results to give brief proofs for $H_j \ (2 \leq j \leq 11)$ in Section \ref{S:H2-11}. In Section \ref{S:J12} we present a partial reduction for yet another of Kanade and Russell's conjectural identities. Finally, we conclude in Section \ref{S:Conc} with some additional discussion.

\section*{Acknowledgments}
The research of the first author is supported by the Alfried Krupp Prize for Young University Teachers of the Krupp foundation and the research leading to these results receives funding from the European Research Council under the European Union's Seventh Framework Programme (FP/2007-2013) / ERC Grant agreement n. 335220 - AQSER.

We thank Shashank Kanade, Jim Lepowsky, Jeremy Lovejoy, and the anonymous referee for helpful comments on a preliminary version of this paper.

\section{Preliminary Results}
\label{S:Prel}

\subsection{$q$-series transformations and Appell's Comparison Theorem}
We require several standard $q$-series identities, all of which can be
found in the appendix of \cite{GR}.
We also use the standard notation for basic hypergeometric series, namely,
\begin{align*}
{_{r+1}\phi_r}\left[\begin{matrix} a_1, a_2,\dotsc,a_{r+1}\\ b_1, b_2,\dotsc,b_{r} \end{matrix};q,x\right]
&:=
\sum_{n\ge0}\frac{\left(a_1, a_2,\dotsc,a_{r+1};q\right)_n}
{\left(b_1, b_2,\dotsc,b_r,q;q\right)_n}x^n.
\end{align*}
We begin with {\it Cauchy's $q$-Binomial Theorem} \cite[(II.3)]{GR},
\begin{align}\label{E:qbinomial}
{_{1}\phi_0}\left[a;q,x\right]
&=
\sum_{n\ge0}
\frac{(a;q)_n}{(q;q)_n}x^n
=
\frac{(ax;q)_\infty}{(x;q)_\infty}
,
\end{align}
which implies {\it Euler's identities} \cite[(II.1) and (II.2)]{GR}
\begin{equation}\label{E:euler}
{_{1}\phi_0}\left[0;q,x\right]
=
\sum_{n\geq 0} \frac{x^n}{(q;q)_n}
=\frac{1}{(x;q)_\infty}
,\quad\,\,\,\,\,
\lim_{a\rightarrow\infty}
{_{1}\phi_0}\left[a;q,\frac{x}{a}\right]
=
\sum_{n\geq 0} \frac{(-1)^n q^{\frac12 n(n-1)}}{(q;q)_n}x^n
=(x;q)_\infty
.
\end{equation}
We also need {\it Heine's transformation} \cite[(III.1)]{GR},
\begin{align}\label{E:heine}
{_2\phi_1}\left[\begin{matrix} a,b \\ c\end{matrix};q,x\right]
&=
	\frac{(b,ax;q)_\infty}{(c,x;q)_\infty}{_2\phi_1}
	\left[\begin{matrix}\frac{c}{b},x\\ax \end{matrix};q,b\right]
,
\end{align}
which implies (using the second variant of Heine's identities) the $q$-analog of Kummer's Theorem,
which is also known as the {\it Bailey-Daum summation} \cite[(II.9)]{GR},
\begin{align} \label{E:kummer}
{_2\phi_1}\left[\begin{matrix} a, b\\ \frac{aq}{b} \end{matrix};q,-\frac{q}{b}\right]
&=
\frac{\left(aq,\frac{aq^2}{b^2},q^2;q^2 \right)_\infty}{\left(-\frac{q}{b},\frac{aq}{b},q ;q\right)_\infty }.
\end{align}
Lastly, we have {\it Hall's formula} \cite[(III.10)]{GR},
\begin{align}
\label{E:3phi2a}
{_3\phi_2}\left[\begin{matrix}
	a, b, c \\
	d,e\end{matrix};q,\frac{de}{abc}\right]
&=
\frac{\left( b,\frac{de}{ab},\frac{de}{bc} ;q\right)_\infty}
{\left( d,e,\frac{de}{abc} ;q\right)_\infty}
{_3\phi_2}\left[\begin{matrix}
	\frac{d}{b}, \frac{e}{b}, \frac{de}{abc} \\[0.25ex]
	\frac{de}{ab},\frac{de}{bc}\end{matrix};q,b\right]
.
\end{align}

We also make use of a result that is sometimes referred to as Appell's Comparison Theorem,
which is common when dealing with limiting cases of functional equations and recurrences.
The following statement is a slight extension of Theorem 8.2 in \cite{Rud} to allow for complex coefficients.
\begin{proposition}\label{P:Appell}
Suppose that $F(x)=\sum_{n\ge0}\alpha_n x^n$ and $\alpha_\infty := \lim_{n \to \infty} \alpha_n$ exists
(and thus $F(x)$ has radius of convergence at least one). Then
\begin{align*}
\lim_{x\rightarrow1^-}(1-x) F(x) &= \alpha_\infty.
\end{align*}
In particular, if $G(x) = \sum_{n \geq 0 } \beta_n x^n$ has radius of convergence greater than one, then $F(x) := \frac{G(x)}{1-x}$ is such a series, and
\begin{equation*}
\lim_{x \rightarrow 1^-} (1-x) F(x) = \alpha_\infty = \sum_{n \geq 0} \beta_n = G(1).
\end{equation*}
\end{proposition}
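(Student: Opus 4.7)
The plan is to reduce the first statement to a direct Abel-summation-style estimate, and then derive the particular case as an immediate corollary via Cauchy products.

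For the main statement, I would subtract the desired limit off explicitly. Since $\alpha_\infty/(1-x) = \alpha_\infty \sum_{n \geq 0} x^n$ for $|x|<1$, one has the identity
\begin{equation*}
(1-x)F(x) - \alpha_\infty \;=\; (1-x)\sum_{n \geq 0}(\alpha_n - \alpha_\infty)\,x^n,
\end{equation*}
so it suffices to show that the right-hand side tends to $0$ as $x \to 1^-$. Given $\varepsilon > 0$, the assumption that $\alpha_n \to \alpha_\infty$ furnishes an $N$ with $|\alpha_n - \alpha_\infty| < \varepsilon$ for all $n \geq N$, and I would split the sum at $N$. The tail contributes at most
\begin{equation*}
(1-x)\sum_{n \geq N}|\alpha_n - \alpha_\infty|\,x^n \;\leq\; \varepsilon(1-x)\sum_{n \geq N} x^n \;=\; \varepsilon\, x^N \;<\; \varepsilon,
\end{equation*}
while the finite head contributes at most $(1-x)\sum_{n<N}|\alpha_n-\alpha_\infty|$, which tends to $0$ as $x\to 1^-$ because it is a finite constant times $(1-x)$. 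Letting $\varepsilon \to 0$ then completes the proof of the first part.

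For the second statement, when $G(x) = \sum_n \beta_n x^n$ has radius of convergence strictly greater than $1$, the series $G(1) = \sum_n \beta_n$ converges (indeed absolutely). Expanding the Cauchy product
\begin{equation*}
F(x) \;=\; \frac{G(x)}{1-x} \;=\; \Bigl(\sum_{k \geq 0}\beta_k x^k\Bigr)\Bigl(\sum_{m \geq 0} x^m\Bigr) \;=\; \sum_{n \geq 0}\alpha_n x^n
\end{equation*}
shows that $\alpha_n = \sum_{k=0}^n \beta_k$ is a sequence of partial sums, so $\alpha_\infty = G(1)$ exists. Applying the first part then yields $\lim_{x \to 1^-}(1-x)F(x) = G(1)$, as desired.

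I do not expect a substantial obstacle here: the argument is a standard Abel-type estimate, and the only place requiring mild care is the splitting of the series, where one must handle the head (a finite sum that vanishes against the factor $(1-x)$) separately from the tail (which is controlled uniformly using $|\alpha_n - \alpha_\infty|<\varepsilon$ and the geometric series identity $(1-x)\sum_{n\geq N}x^n = x^N$).
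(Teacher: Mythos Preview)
Your argument is correct and is the standard Abel--type proof one would expect. Note, however, that the paper does not actually supply a proof of this proposition: it is stated as ``a slight extension of Theorem 8.2 in \cite{Rud} to allow for complex coefficients,'' with no further justification given. Your proposal therefore fills in a genuine omission rather than paralleling an existing argument, and the route you take (subtracting the limit, splitting at $N$, then reading off $\alpha_n=\sum_{k\le n}\beta_k$ from the Cauchy product for the corollary) is precisely the textbook approach underlying the cited reference.
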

In this article we view power series as analytic functions of complex variables. This is necessary
for our use of Appell's Comparison Theorem. Any formal rearrangements are considered to be done
within the radius of convergence, and are valid as identities between analytic functions. Using the fact that each $H_j(x)$ is the generating function for some subset of partitions into distinct parts (see \cite{KR18}), we see that it is absolutely convergent for all $x \in \mathbb{C}$.

\subsection{Solving general functional equations}

We now give the first of our general formulas for functions satisfying certain
functional equations. This formula is relevant for
$H_j$ for $2\leq j \leq 9$
\begin{proposition}\label{Prop1}
Suppose that $A(x) = \sum_{n\ge0}\alpha_n x^n$
has radius of convergence greater than one, $A(0)=1$, and $A(x)$ satisfies
\begin{align}\label{E:Arec}
A(x)
&=
	\left(1+xq^{a+2}\right)A\left(xq^2\right)
	+xq^{a+b}\left(1+xq^b\right)A\left(xq^4\right)
	+x^2q^{2a+2b+2}\left(1-xq^4\right)A\left(xq^6\right)
,
\end{align}
where $|q|<1$.
Then
\begin{align*}
A(1)
&=
	\left(q^{2b+a-2};q^4\right)_\infty
	\sum_{n\ge0}
	\frac{\left(q^{3b-6};q^6\right)_n  q^{ n^2 + (a+1)n}}
	{\left(q^{b-2},q^2;q^2\right)_{n}  \left(q^{2b+a-2};q^4\right)_n }
.
\end{align*}
Furthermore, if $a=0$, then we have
\begin{align*}
A(1)
&=
\frac{\left(q^{3b}; q^{12}\right)_\infty}{\left(q^2, q^b; q^4\right)_\infty}.
\end{align*}
\end{proposition}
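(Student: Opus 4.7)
The plan is guess-and-verify. Expanding \eqref{E:Arec} as a power series in $x$ and comparing coefficients of $x^n$ yields, for $n\ge 1$,
\[
(1-q^{2n})\alpha_n = \alpha_{n-1}\bigl(q^{a+2n}+q^{a+b+4n-4}\bigr)+\alpha_{n-2}\bigl(q^{a+2b+4n-8}+q^{2a+2b+6n-10}\bigr)-\alpha_{n-3}\, q^{2a+2b+6n-12},
\]
where $\alpha_n$ denotes the $n$-th Taylor coefficient of $A$. Since $|q|<1$ makes the leading factor $1-q^{2n}$ nonzero, the coefficients $\alpha_n$ are uniquely determined by $\alpha_0 = A(0)=1$. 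Hence any analytic function $\tilde A(x)$ satisfying \eqref{E:Arec} and $\tilde A(0)=1$ agrees with $A(x)$ as a formal (and therefore analytic) power series, and in particular $A(1)=\tilde A(1)$.

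Motivated by the elementary identity $(q^{2b+a-2};q^4)_\infty/(q^{2b+a-2};q^4)_n = (q^{2b+a-2+4n};q^4)_\infty$, the natural candidate is
\[
\tilde A(x) := \sum_{n\ge 0}\frac{(q^{3b-6};q^6)_n}{(q^{b-2},q^2;q^2)_n}\bigl(xq^{2b+a-2+4n};q^4\bigr)_\infty q^{n^2+(a+1)n}\,x^n,
\]
which transparently satisfies $\tilde A(0)=1$ (only the $n=0$ term survives) and whose value at $x=1$ equals the right-hand side stated in the proposition. The substantive step is to verify that $\tilde A$ satisfies \eqref{E:Arec}. I would form the combination
\[
R(x):=\tilde A(x)-(1+xq^{a+2})\tilde A(xq^2)-xq^{a+b}(1+xq^b)\tilde A(xq^4)-x^2 q^{2a+2b+2}(1-xq^4)\tilde A(xq^6),
\]
gather it into a single sum indexed by $n$, and use the peeling identity $(yq^c;q^4)_\infty = (1-yq^c)(yq^{c+4};q^4)_\infty$ to align the four infinite products. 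I expect the result to telescope, perhaps after an index shift $n\mapsto n+1$ in one of the subterms, leaving $R(x)\equiv 0$.

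For the case $a = 0$, it remains to reduce
\[
\sum_{n\ge 0}\frac{(q^{3b-6};q^6)_n\,q^{n^2+n}}{(q^{b-2},q^2;q^2)_n(q^{2b-2};q^4)_n}
\]
to closed product form. This is a series in mixed bases $q^2,q^4,q^6$, so I would homogenize using $(z;q^6)_n = (z;q^2)_{3n}/(zq^2,zq^4;q^6)_n$ (or by splitting the summation modulo $3$), recast it as a terminating or bilateral ${}_2\phi_1$ in base $q^2$, and apply the Bailey--Daum summation \eqref{E:kummer}. An intermediate application of Heine's transformation \eqref{E:heine} or Hall's formula \eqref{E:3phi2a} may be required to place the series in the precise shape demanded by \eqref{E:kummer}.

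The main obstacle will be the verification that $\tilde A$ satisfies \eqref{E:Arec} in Step 2. The interaction of three nested $q$-shifts with three distinct Pochhammer bases, together with the sign flip $(1-xq^4)$ in the third term, produces a delicate cancellation that must be carried out by careful bookkeeping. A secondary obstacle is the mixed-base product reduction in the $a=0$ case, since the series does not immediately present itself in classical hypergeometric form and several rewrites are likely needed before a summation theorem bites.
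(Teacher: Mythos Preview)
Your guess-and-verify strategy fails at the guess: the candidate $\tilde A(x)$ you wrote down does \emph{not} satisfy \eqref{E:Arec}. Compare coefficients of $x$. From the recurrence one gets
\[
(1-q^2)\alpha_1 = q^{a+2}+q^{a+b}, \qquad \text{so}\quad \alpha_1=\frac{q^{a+2}(1+q^{b-2})}{1-q^2}=\frac{q^{a+2}(1+q^2)(1+q^{b-2})}{1-q^4}.
\]
Expanding your $\tilde A(x)$ (using $(z;q^4)_\infty = 1 - z/(1-q^4)+\cdots$) gives instead
\[
\tilde\alpha_1 = -\frac{q^{2b+a-2}}{1-q^4} + \frac{1-q^{3b-6}}{(1-q^{b-2})(1-q^2)}\,q^{a+2}
= \alpha_1 + \frac{q^{a+2b}}{1-q^4},
\]
so $\tilde\alpha_1 \neq \alpha_1$. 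The telescoping you anticipated simply does not occur; $R(x)\not\equiv 0$, and no amount of index shifting will fix this since the two power series already disagree at first order.

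The underlying reason is that the paper never obtains a closed form for $A(x)$ at all; it only evaluates $A(1)$. The proof passes through a chain $A\to B\to C\to D\to E$ of auxiliary series, where two of the links ($B\to C$ and $D\to E$) rescale the $n$th Taylor coefficient by $(-q^{a+2};q^2)_n^{\pm 1}$. These are Hadamard-type operations, not multiplication by a function of $x$, so they do not correspond to any simple substitution; consequently there is no natural $x$-lift of the product side that one could hope to verify directly. The paper instead solves the terminal first-order $q$-difference equation for $E(x)$ as an infinite product, expands it via \eqref{E:euler} into a triple sum, and then unwinds back to $A(1)$ using Appell's Comparison Theorem (Proposition~\ref{P:Appell}) at each Hadamard step. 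That limiting device is the essential idea your outline is missing.

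Your instinct for the $a=0$ reduction is closer to the mark: the paper's Remark following the proof does evaluate that series directly via Hall's formula \eqref{E:3phi2a} followed by the $q$-binomial theorem \eqref{E:qbinomial}, roughly along the lines you sketch (no Bailey--Daum is needed). But in the main proof this case is handled earlier, by observing that the functional equation for $C(x)$ becomes first order when $a=0$ and solves to a product immediately.
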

\begin{proof}
Set
\begin{align*}
B(x)
&:=
	\frac{A(x)}{\left(x;q^2\right)_\infty}
,
\end{align*}
so that dividing \eqref{E:Arec}
by $(xq^4;q^2)_\infty$ yields
\begin{align*}
\left(1-x\right)&\left(1-xq^2\right)B(x)\\
&=
	\left(1-xq^2\right)\left(1+xq^{a+2}\right)B\left(xq^2\right)
	+xq^{a+b}\left(1+xq^b\right)B\left(xq^4\right)
	+x^2q^{2a+2b+2}B\left(xq^6\right).
\end{align*}
Writing
\begin{equation*}
B(x) =: \sum_{n\ge0}\beta_n x^n
\end{equation*}
we obtain, after some reordering,
\begin{multline}\label{E:betaRec}
\left(1-q^{2n}\right)\beta_n=
	\left(1 + q^2 - q^{2n} + q^{2n+a} + q^{4n+a+b-4}\right)\beta_{n-1}
	\\
-q^2\left(1+q^{2n+a-2}\right) \left(1-q^{4n+a+2b-10}\right)  \beta_{n-2}
.
\end{multline}

Set
\begin{align*}
\gamma_n &:= \frac{\beta_n}{\left(-q^{a+2};q^2\right)_n},
\qquad\qquad\qquad
C(x) := \sum_{n\ge0}\gamma_n x^n.
\end{align*}
Proposition \ref{P:Appell}
applies to $B(x)$ and $C(x)$, giving that
\begin{align}
\label{E:lim(1-x)B}
\lim_{x\rightarrow 1^-} \left(1-x\right)B\left(x\right)
&=
\beta_{\infty}
=
\left(-q^{a+2};q^2\right)_\infty \gamma_\infty
=
\left(-q^{a+2};q^2\right)_\infty
\lim_{x\rightarrow 1^-} \left(1-x\right)C\left(x\right)
.
\end{align}
We proceed by dividing \eqref{E:betaRec} by $(-q^{a+2};q^2)_{n-1}$, which gives
\begin{align*}
\left(1-q^{2n}\right)\left(1+q^{2n+a}\right)\gamma_n
=
	\left(1 + q^2 - q^{2n} + q^{2n+a} + q^{4n+a+b-4}\right)\gamma_{n-1}
	-q^2\left(1-q^{4n+a+2b-10}\right)  \gamma_{n-2}
.
\end{align*}
In terms of $C(x)$, this may be rewritten as
\begin{align}\label{E:Crec}
\left(1-x\right)\left(1-xq^2\right)C\left(x\right)
&=
	\left(1-q^a\right)\left(1-xq^2\right)C\left(xq^2\right)
	+q^a\left(1+xq^b+x^2q^{2b}\right)C\left(xq^4\right)
.
\end{align}

If $a=0$, then this gives
$$
C(x) = \frac{\left(x^3 q^{3b}; q^{12}\right)_\infty}{\left(x;q^2\right)_\infty \left(xq^b; q^4\right)_\infty},
$$
which yields, also using \eqref{E:lim(1-x)B},
\begin{align*}
A(1)
&=
\left(q^2;q^2\right)_\infty
\lim_{x\rightarrow1^-}(1-x)B(x)
=
\frac{ \left(q^2,-q^2;q^2\right)_\infty \left(q^{3b};q^{12}\right)_\infty }
{ \left(q^2;q^2\right)_\infty \left(q^b;q^4\right)_\infty }
=
\frac{\left(q^{3b}; q^{12}\right)_\infty}{\left(q^2, q^b; q^4\right)_\infty}.
\end{align*}

Set
\begin{align*}
D\left(x\right)
&:=
	\frac{\left(xq^{b-2};q^2\right)_\infty \left(x;q^2\right)_\infty}{\left(x^3q^{3b-6};q^6\right)_\infty}C\left(x\right)
.
\end{align*}
and multiply \eqref{E:Crec} by
$\displaystyle \frac{(xq^{b};q^2)_\infty (xq^4;q^2)_\infty}{(x^3q^{3b};q^6)_\infty}$
to obtain
\begin{align*}
\left( 1+xq^{b-2}+x^2q^{2b-4} \right)D\left(x\right)
&=
\left(1-q^a\right)D\left(xq^2\right) + q^aD\left(xq^4\right).
\end{align*}
Writing
\begin{equation*}
D(x) =:
\sum_{n\ge0} \delta_n x^n,
\end{equation*}
we may rewrite this as
\begin{align}\label{E:deltaRec}
\left(1-q^{2n}\right)\left(1+q^{2n+a}\right)\delta_n
&=
	-q^{b-2}\delta_{n-1}-q^{2b-4}\delta_{n-2}
.
\end{align}

Set
\begin{align*}
\varepsilon_n
&:=
	\left(-q^{a+2};q^2\right)_n\delta_n.
\end{align*}
We multiply
\eqref{E:deltaRec} by $(-q^{a+2};q^2)_{n-1}$ to obtain
\begin{align*}
\left(1-q^{2n}\right)\varepsilon_n
&=
	 -q^{b-2}\varepsilon_{n-1}-q^{2b-4}\left(1+q^{2n+a-2}\right)\varepsilon_{n-2}
.
\end{align*}
Setting
\begin{equation*}
E(x) := \sum_{n\geq 0} \varepsilon_n x^n,
\end{equation*}
we obtain after rewriting
\begin{align*}
E\left(x\right)
&=
	 \frac{\left(1-xq^{b-2}\right)\left(1-x^2q^{2b+a-2}\right)}{1-x^3q^{3b-6}}E\left(xq^2\right)
.
\end{align*}
Note that $\varepsilon_0=1$, and thus
\begin{align*}
E\left(x\right)
&=
	\frac{\left(xq^{b-2};q^2\right)_\infty \left(x^2q^{2b+a-2};q^4\right)_\infty}{\left(x^3q^{3b-6};q^6\right)_\infty}
.
\end{align*}
Using \eqref{E:euler}, we find that
\begin{align*}
E\left(x\right)
&=
	\sum_{\ell,r,m\ge0}
	\frac{ \left(-1\right)^m q^{m^2 + \left(b-3\right)m} \left(-1\right)^r x^{2r} q^{2r^2 + \left(2b+a-4\right)r} x^{3\ell} q^{3\left(b-2\right)\ell} }
	{ \left(q^2;q^2\right)_m \left(q^4;q^4\right)_r \left(q^6;q^6\right)_\ell}x^m
.
\end{align*}
As such, with $m=n-2r-3\ell$,
\begin{align*}
\varepsilon_n
&=
	\sum_{\ell,r\ge0}
	\frac{ \left(-1\right)^{n+r+\ell} q^{\left(n-2r-3\ell\right)^2 + \left(b-3\right)\left(n-2r-3\ell\right) + 2r^2 + \left(2b+a-4\right)r + 3\left(b-2\right)\ell} }
	{ \left(q^2;q^2\right)_{n-2r-3\ell} \left(q^4;q^4\right)_r \left(q^6;q^6\right)_\ell}
.
\end{align*}

Returning to $D(x)$, we have
\begin{align*}
\delta_n
&=
	\sum_{\ell,r\ge0}
	\frac{ \left(-1\right)^{n+r+\ell} q^{\left(n-2r-3\ell\right)^2 + \left(b-3\right)\left(n-2r-3\ell\right) + 2r^2 + \left(2b+a-4\right)r + 3\left(b-2\right)\ell} }
	{ \left(q^2;q^2\right)_{n-2r-3\ell} \left(q^4;q^4\right)_r \left(q^6;q^6\right)_\ell \left(-q^{a+2};q^2\right)_n}
.
\end{align*}
Making the change of variables $n\mapsto n+2r+3\ell$ yields
\begin{align}\label{E:Dsum}
D(x) =	\sum_{\ell,r,n\ge0}
	\frac{ \left(-1\right)^{n+r} q^{n^2 + \left(b-3\right)n + 2r^2 + \left(2b+a-4\right)r + 3\left(b-2\right)\ell} }
	{ \left(q^2;q^2\right)_{n} \left(q^4;q^4\right)_r \left(q^6;q^6\right)_\ell \left(-q^{a+2};q^2\right)_{n+2r+3\ell}}x^{n+2r+3\ell}
.
\end{align}
We apply \eqref{E:heine} to the sum on $n$ to give
\begin{align*}
&\sum_{n\ge0}
\frac{ \left(-1\right)^{n}  q^{n^2 + \left(b-3\right)n }}
{ \left(q^2;q^2\right)_{n} \left(-q^{a+2};q^2\right)_{n+2r+3\ell}}x^{n}
=
	\frac{1}{\left(-q^{a+2};q^2\right)_{2r+3\ell}}
	\sum_{n\ge0}
	\frac{\left(-1\right)^{n}  q^{ n^2 + \left(b-3\right)n } }
	{\left(-q^{4r+6\ell+a+2},q^2;q^2\right)_{n} }x^n
\\
&=
	\frac{1}{\left(-q^{a+2};q^2\right)_{2r+3\ell}}
	\lim_{\substack{w\rightarrow\infty\\z\rightarrow 0}}
	{_2\phi_1}\left[\begin{matrix}
	w, z\\
	-q^{4r+6\ell+a+2}\end{matrix};q^2,\frac{xq^{b-2}}{w}\right]
\\	
&=
	 \frac{\left(xq^{b-2};q^2\right)_\infty}{\left(-q^{a+2};q^2\right)_{2r+3\ell} \left(-q^{4r+6\ell+a+2};q^2\right)_{\infty}}
	\lim_{\substack{z\rightarrow 0}}
	{_2\phi_1}\left[\begin{matrix}
	- q^{4r+6\ell+a+2}z^{-1}, 0\\
	xq^{b-2}\end{matrix};q^2,z\right]
\\
&=
	 \frac{\left(xq^{b-2};q^2\right)_\infty}{\left(-q^{a+2};q^2\right)_{\infty} }
	\sum_{n\ge0}\frac{ q^{n^2+\left(a+1\right)n+4rn+6\ell n}  }
	{\left(xq^{b-2},q^2;q^2\right)_n}
.
\end{align*}
We plug this back into \eqref{E:Dsum}, and evaluate the inner
sums on $r$ and $\ell$ with \eqref{E:euler}, yielding
\begin{align*}
D\left(x\right)
&=
\frac{\left(xq^{b-2};q^2\right)_\infty}{\left(-q^{a+2};q^2\right)_\infty}
\sum_{n,\ell,r\ge0}
\frac{\left(-1\right)^{r} q^{ n^2 + \left(a+1\right)n + 2r^2 + \left(2b+a+4n-4\right)r + 3\left(b-2+2n\right)\ell }}
{\left(xq^{b-2},q^2;q^2\right)_{n} \left(q^4;q^4\right)_r \left(q^6;q^6\right)_\ell } x^{2r+3\ell}
\\
&=
\frac{\left(xq^{b-2};q^2\right)_\infty}{\left(-q^{a+2};q^2\right)_\infty}
\sum_{n\ge0}
\frac{ \left(x^2q^{4n+2b+a-2};q^4\right)_\infty  q^{ n^2 + \left(a+1\right)n} }
{\left(xq^{b-2},q^2;q^2\right)_{n}  \left(x^3q^{6n+3b-6};q^6\right)_\infty }
\\
&=
\frac{\left(xq^{b-2};q^2\right)_\infty \left(x^2q^{2b+a-2};q^4\right)_\infty }
{\left(-q^{a+2};q^2\right)_\infty \left(x^3q^{3b-6};q^6\right)_\infty}
\sum_{n\ge0}
\frac{ \left(x^3q^{3b-6};q^6\right)_n q^{ n^2 + \left(a+1\right)n}  }
{\left(xq^{b-2},q^2;q^2\right)_{n}  \left(x^2q^{2b+a-2};q^4\right)_n }
.
\end{align*}
Thus
\begin{align*}
	C(x)= \frac{\left(x^2 q^{2b+a-2};q^4\right)_\infty}{\left(-q^{a+2};q^2\right)_\infty \left(x;q^2\right)_\infty}\sum_{n\ge0}
	\frac{  \left(x^3q^{3b-6};q^6\right)_n q^{ n^2 + \left(a+1\right)n} }
	{\left(xq^{b-2},q^2;q^2\right)_{n}  \left(x^2q^{2b+a-2};q^4\right)_n }.
\end{align*}
Recalling \eqref{E:lim(1-x)B}, we therefore have
\begin{align*}
A(1)
&=
\left(q^2;q^2\right)_\infty \lim_{x\rightarrow 1^-} \left(1-x\right)B\left(x\right)
=
\left(q^2,-q^{a+2};q^2\right)_\infty \lim_{x\rightarrow 1^-} \left(1-x\right)C\left(x\right)
\\
&=
	\frac{\left(q^2;q^2\right)_\infty \left(-q^{a+2};q^2\right)_\infty \left(q^{2b+a-2};q^4\right)_\infty }
	{ \left(q^2;q^2\right)_\infty \left(-q^{a+2};q^2\right)_\infty }
	\sum_{n\ge0}
	\frac{ \left(q^{3b-6};q^6\right)_n  q^{ n^2 + \left(a+1\right)n} }
	{\left(q^{b-2},q^2;q^2\right)_{n}  \left(q^{2b+a-2};q^4\right)_n }
\\
&=
	\left(q^{2b+a-2};q^4\right)_\infty
	\sum_{n\ge0}
	\frac{ \left(q^{3b-6};q^6\right)_n q^{ n^2 + \left(a+1\right)n}  }
	{\left(q^{b-2},q^2;q^2\right)_{n}  \left(q^{2b+a-2};q^4\right)_n }
.
\end{align*}
\end{proof}
\begin{remark}
When $a=0$, the series in Proposition \ref{Prop1} sums to an
infinite product. However, this alone is not a deep result.
Indeed, by letting $\omega:=e^{\frac{2\pi i}{3}}$, and then using
\eqref{E:3phi2a} and \eqref{E:qbinomial},
we find that
\begin{align*}
\sum_{n\ge0}\frac{ \left(q^{3b-6};q^6\right)_n q^{n^2+n}}{(q^{b-2},q^2;q^2)_n(q^{2b-2};q^4)_n}
&=
\lim_{x\rightarrow\infty}
{_3\phi_2}\left[\begin{matrix}
	x, \omega q^{b-2}, \omega^2 q^{b-2} \\
	q^{b-1},-q^{b-1}\end{matrix};q^2,-\frac{q^{2}}{x}\right]
\\
&=
\frac{\left( \omega q^{b-2}, -q^2;q^2\right)_\infty}{\left( q^{b-1}, -q^{b-1};q^2\right)_\infty}
{_1\phi_0}\left[	\omega q^2 ;q^4,\omega q^{b-2} \right]
=
\frac{\left( q^{3b};q^{12}\right)_\infty }
{\left( q^2,q^{b}, q^{2b-2};q^4\right)_\infty}
.
\end{align*}
\end{remark}

We require another general formula that is relevant for
$H_{10}$ and $H_{11}$.
\begin{proposition}\label{Prop2}
Suppose that $A(x)=\sum_{n\ge0}\alpha_nx^n$ has positive radius of convergence
and $A(x)$ satisfies
\begin{align}\label{E:Prop2ARec}
A(x)
&=
	\left(1+q^a+x^2q^b+x^2q^c\right)A\left(xq^3\right)
	 -q^a\left(1+x^2q^{b+c-a-5}\right)\left(1+x^2q^{11}\right)A\left(xq^6\right)
,
\end{align}
where $a\not\in 3\mathbb{Z}$ if $a\le-6$.
Then
\begin{align*}
A(x)
&=
	\alpha_0\left(-x^2q^5;q^6\right)_\infty
	\sum_{n\ge0}\frac{ \left( q^{b-5},q^{c-5};q^6 \right)_n (-1)^nq^{5n}  }
		{\left( q^{6},q^{a+6};q^6 \right)_n}x^{2n}
	\\&\quad
	+
	\alpha_1x\left(-x^2q^5;q^6\right)_\infty
	\sum_{n\ge0}\frac{ \left( q^{b-2},q^{c-2};q^6 \right)_n (-1)^nq^{5n} }
		{\left( q^{9}, q^{a+9};q^6 \right)_n}x^{2n} 	
.
\end{align*}
\end{proposition}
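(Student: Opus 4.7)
The plan is to adapt the method of Proposition~\ref{Prop1}: absorb the product $(-x^2q^5;q^6)_\infty$ that is common to both pieces of the target formula, convert \eqref{E:Prop2ARec} into a simpler functional equation for a rescaled series $B(x)$, and then read off a two-term coefficient recurrence that decouples into two independent subrecurrences for the even- and odd-indexed coefficients. Concretely, I set $B(x) := A(x)/(-x^2q^5;q^6)_\infty$. Using $(-x^2q^5;q^6)_\infty = (1+x^2q^5)(-x^2q^{11};q^6)_\infty$ and $(-x^2q^{17};q^6)_\infty = (-x^2q^{11};q^6)_\infty/(1+x^2q^{11})$, the factor $(-x^2q^{11};q^6)_\infty$ becomes common to all three terms in \eqref{E:Prop2ARec}, and the stray $(1+x^2q^{11})$ next to $A(xq^6)$ is cancelled cleanly. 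Dividing by the common factor yields
\[
(1+x^2q^5)B(x) = (1+q^a+x^2q^b+x^2q^c)B(xq^3) - q^a(1+x^2q^{b+c-a-5})B(xq^6).
\]

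Next, I write $B(x) = \sum_{n\ge0}\beta_n x^n$ and compare coefficients of $x^n$. The left-hand coefficient of $\beta_n$ is $1-(1+q^a)q^{3n}+q^{a+6n} = (1-q^{3n})(1-q^{a+3n})$, while the coefficient of $\beta_{n-2}$ on the right is $(q^b+q^c)q^{3n-6} - q^{b+c+6n-17} - q^5$. The substitution $u := q^{b+3n-6}$, $v := q^{c+3n-6}$ reveals the algebraic identity $u+v-uvq^{-5}-q^5 = (u-q^5)(1-vq^{-5})$, and hence the coefficient equals $-q^5(1-q^{b+3n-11})(1-q^{c+3n-11})$. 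Therefore the recurrence collapses to
\[
(1-q^{3n})(1-q^{a+3n})\beta_n = -q^5(1-q^{b+3n-11})(1-q^{c+3n-11})\beta_{n-2}.
\]

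Because this recurrence only links $\beta_n$ to $\beta_{n-2}$, the even- and odd-indexed subsequences evolve independently. The hypothesis that $a\not\in 3\mathbb{Z}$ whenever $a\le-6$ guarantees $1-q^{a+3n}\ne 0$ for all $n\ge 2$, so the recurrence is uniquely invertible; iterating from the initial data gives
\[
\beta_{2m} = \beta_0\,\frac{(-1)^m q^{5m}(q^{b-5},q^{c-5};q^6)_m}{(q^6,q^{a+6};q^6)_m},\qquad \beta_{2m+1} = \beta_1\,\frac{(-1)^m q^{5m}(q^{b-2},q^{c-2};q^6)_m}{(q^9,q^{a+9};q^6)_m}.
\]
Since $(-x^2q^5;q^6)_\infty = 1+O(x^2)$ we have $\beta_0=\alpha_0$ and $\beta_1=\alpha_1$, and multiplying the resulting series for $B(x)$ back by $(-x^2q^5;q^6)_\infty$ recovers exactly the claimed closed form for $A(x)$. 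I expect the main obstacle to be spotting the factorization of the coefficient of $\beta_{n-2}$: the raw expression $(q^b+q^c)q^{3n-6}-q^{b+c+6n-17}-q^5$ gives no outward hint of product structure, and without the $(u,v)$ substitution that exposes $(u-q^5)(1-vq^{-5})$ the recurrence has no obvious closed-form solution. Once that identity is in hand, verifying that the mild invertibility condition on $a$ is exactly what the hypothesis provides, and assembling the sum into the final product $\times$ series form, is routine bookkeeping.
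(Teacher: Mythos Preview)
Your proof is correct and follows essentially the same route as the paper: set $B(x)=A(x)/(-x^2q^5;q^6)_\infty$, divide \eqref{E:Prop2ARec} by $(-x^2q^{11};q^6)_\infty$ to obtain the simplified functional equation for $B$, extract the two-term recurrence $\beta_n = -q^5(1-q^{3n+b-11})(1-q^{3n+c-11})/[(1-q^{3n})(1-q^{3n+a})]\,\beta_{n-2}$, and iterate separately on even and odd indices from $\beta_0=\alpha_0$, $\beta_1=\alpha_1$. The only difference is presentational: you spell out the $(u,v)$ factorization that the paper leaves implicit.
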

\begin{proof}
Setting
\begin{align*}
B(x):= \frac{A(x)}{\left(-x^2q^5;q^6\right)_\infty}
\end{align*}
and dividing \eqref{E:Prop2ARec} by
$(-x^2q^{11};q^6)_\infty$ we obtain
\begin{align*}
\left(1+x^2q^5\right)B(x)
&=
	\left(1+q^a+x^2q^b+x^2q^c\right)B\left(xq^3\right)
	-q^a\left(1+x^2q^{b+c-a-5}\right)B\left(xq^6\right)
.
\end{align*}
Writing
\begin{equation*}
B(x)  =: \sum_{n\ge0}\beta_nx^n
\end{equation*}
yields
\begin{align*}
\beta_{n}
&=-
\frac{q^5\left(1-q^{3n+b-11}\right)\left(1-q^{3n+c-11}\right)}{\left(1-q^{3n}\right)\left(1-q^{3n+a}\right)}
\beta_{n-2}.
\end{align*}
In particular,
\begin{align*}
\beta_{2n}=
\frac{(-1)^nq^{5n}\left(q^{b-5},q^{c-5};q^6\right)_n }{\left(q^{6},q^{a+6};q^6\right)_n}\beta_0
,\qquad
\beta_{2n+1} = \frac{(-1)^nq^{5n}\left(q^{b-2},q^{c-2};q^6\right)_n }{\left(q^{9},q^{a+9};q^6\right)_n}\beta_1
.
\end{align*}
However, $\beta_0=\alpha_0$ and $\beta_1=\alpha_1$, so the result
follows.
\end{proof}

\subsection{Recurrences for $H_j$ for $1\leq j\leq 9$}\label{genrec}
Recalling \eqref{E:H1-9def}, it is natural to define generalized sums for $N \in \Z$ and $c,d \in \R$ by
\begin{equation*}
\label{E:hNcdDef}
h_{c,d,N} := \sum_{j,k\in\Z} \frac{(-1)^k q^{3k^2+(2c-1)j + 3dk}}{(q;q)_{N-2j-3k}\left(q^4;q^4\right)_j\left(q^6;q^6\right)_k}.
\end{equation*}
Note that for $\alpha\in\mathbb{C}$, we let
$(x;q)_\alpha:=\frac{(x;q)_\infty}{(xq^\alpha;q)_\infty}$. In particular,
$(q;q)_n^{-1}=0$ if $n\in-\N$, and thus $h_{c,d,N} = 0$ for $N < 0$.

In order to use Proposition \ref{Prop1}, we need three-term linear recurrences for the $h_{c,d,N}$, where $N$ varies and $c$ and $d$ are fixed. We find that all such recurrences needed for Kanade and Russell's conjectures are contained in three families.

\begin{proposition}\label{P:hNcdrec}
We have the recurrence, writing $h_N := h_{c,d,N}$,
\begin{equation}
\label{E:hcdlongrec}
\left(1 - q^{2N}\right) h_{N} = (1+q)h_{N-1} + \left(q^{2c-1} - q\right) h_{N-2} - \left(q^{2c-1} +q^{2c} + q^{2N-3+3d}\right) h_{N-3} + q^{2c} h_{N-4}.
\end{equation}

The right-hand side of \eqref {E:hcdlongrec} can be replaced by an equivalent three-term recurrence if $(c,d)$ are in the following one-parameter families.

\begin{enumerate}[leftmargin=*, label={\rm (\arabic*)}]
\item If $d = 0$, then we have
\begin{equation}
\label{E:d=0rec}
\left(1- q^{2N}\right) h_N = \left(1 + q^{2N-1}\right)h_{N-1} + \left(q^{2c-1} + q^{2N-2}\right) h_{N-2} - q^{2c-1} h_{N-3}.
\end{equation}

\item If $d = -1$, then we have
\begin{equation}
\label{E:d=-1rec}
\left(1- q^{2N}\right) h_N = \left(q + q^{2N-2}\right)h_{N-1} + \left(q^{2c-1} + q^{2N-4}\right) h_{N-2} - q^{2c} h_{N-3}.
\end{equation}

\item If $c = d+\frac32$, then we have
\begin{align}
\label{E:d+3/2rec}
&\left(1 - q^{2N}\right) h_N \\
\notag
&= \left(1 + q - q^{d+1} +  q^{2N-1+d}\right)h_{N-1} + \left(-q + q^{d+1} + q^{d+2} + q^{2N-2+2d}\right) h_{N-2} - q^{d+2} h_{N-3}.
\end{align}
\end{enumerate}
\end{proposition}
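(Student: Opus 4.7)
The plan is to derive the recurrences by first computing a closed form for the generating function $H(z) := \sum_{N\ge 0} h_{c,d,N} z^N$, and then translating each recurrence into a functional equation that can be verified directly from that closed form. First, noting that $(q;q)_n^{-1}$, $(q^4;q^4)_j^{-1}$, and $(q^6;q^6)_k^{-1}$ all vanish for negative integer arguments, the triple sum defining $h_{c,d,N}$ is effectively a sum over $j,k\ge 0$ with $2j+3k\le N$. Interchanging the order of summation and applying the first identity in \eqref{E:euler} to the inner sum in $N$ gives
\[
(z;q)_\infty\, H(z) = \sum_{j,k\ge 0}\frac{(-1)^k q^{3k^2+(2c-1)j+3dk}}{(q^4;q^4)_j (q^6;q^6)_k}\, z^{2j+3k}.
\]
The $j$-sum evaluates to $1/(z^2 q^{2c-1};q^4)_\infty$ via \eqref{E:euler}, and after rewriting $3k^2+3dk = 3k(k-1) + 3(d+1)k$ the second identity in \eqref{E:euler} evaluates the $k$-sum to $(z^3 q^{3+3d};q^6)_\infty$. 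Thus
\[
H(z) = \frac{(z^3 q^{3+3d};q^6)_\infty}{(z;q)_\infty\,(z^2 q^{2c-1};q^4)_\infty}.
\]

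With this closed form in hand, I would translate each recurrence into a functional equation using the rules $\sum_N h_{N-k}z^N = z^k H(z)$ and $\sum_N q^{2N+\alpha} h_{N-k} z^N = q^{\alpha+2k} z^k H(q^2 z)$. After rearrangement, each recurrence is equivalent to an identity of the form $H(z)/H(q^2 z) = P(z)/Q(z)$ for explicit polynomials $P$ and $Q$ in $z$. Directly from the closed form,
\[
\frac{H(z)}{H(q^2 z)} = \frac{1 - z^3 q^{3+3d}}{(1-z)(1-qz)(1 - z^2 q^{2c-1})}.
\]
Expanding $(1-z)(1-qz)(1-z^2 q^{2c-1})$ produces exactly the polynomial $1 - (1+q)z - (q^{2c-1}-q)z^2 + (q^{2c-1}+q^{2c})z^3 - q^{2c}z^4$, which establishes the four-term recurrence \eqref{E:hcdlongrec} after clearing denominators.

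For each of the three special cases, the key observation is that the numerator $1 - z^3 q^{3+3d}$ shares a linear factor with the denominator $(1-z)(1-qz)(1-z^2 q^{2c-1})$, which cancels to lower the degree of the rational function by one and correspondingly replaces the four-term recurrence by a three-term one. When $d=0$, factor $1 - z^3 q^3 = (1-qz)(1+qz+q^2 z^2)$ and cancel $(1-qz)$; when $d=-1$, use $1 - z^3 = (1-z)(1+z+z^2)$ and cancel $(1-z)$. When $c = d + 3/2$, the identity $2c-1 = 2d+2$ gives $1 - z^2 q^{2c-1} = (1-zq^{d+1})(1+zq^{d+1})$, while $1 - z^3 q^{3+3d} = (1-zq^{d+1})(1+zq^{d+1}+z^2 q^{2d+2})$, so the common factor $(1-zq^{d+1})$ cancels. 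In each case the simplified ratio matches the functional equation obtained by translating the claimed three-term recurrence, and the verification reduces to expanding a product of at most three binomials in $z$ and reading off the coefficients of $H(z)$ and $H(q^2 z)$. The main obstacle is spotting the right factorization in case (3); this is guided by the resonance condition $c = d + 3/2$, which is precisely what makes the degree-three numerator and a linear factor of $1 - z^2 q^{2c-1}$ share a root.
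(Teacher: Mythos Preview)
Your proof is correct and takes a genuinely different route from the paper. The paper never writes down the closed form for the generating function $H(z)$; instead it establishes three elementary shift relations among the $h_{c,d,N}$ (in $N$, in $c$, and in $d$), assembles an $8\times 8$ linear system from various specializations, and eliminates the auxiliary variables to obtain the four-term recurrence \eqref{E:hcdlongrec}. For the three-term cases it then argues indirectly: assume a recurrence of the shape \eqref{E:hgenrec}, iterate it once to produce a four-term relation, and check that this matches \eqref{E:hcdlongrec}; uniqueness from initial conditions then forces the three-term recurrence to hold.

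Your approach is more transparent. Computing $H(z)=(z^3q^{3+3d};q^6)_\infty/\big((z;q)_\infty(z^2q^{2c-1};q^4)_\infty\big)$ and forming $H(z)/H(q^2z)$ makes the four-term recurrence immediate, and---more importantly---it \emph{explains} why exactly three one-parameter families admit a three-term reduction: they are precisely the loci where the cubic $1-z^3q^{3+3d}$ shares a linear factor with the quartic $(1-z)(1-qz)(1-z^2q^{2c-1})$. The paper's method, by contrast, verifies the three special cases but gives no structural reason why these are the only ones (indeed, in Section~\ref{S:Conc} the authors revisit this point by a separate computation). What the paper's approach buys is a template that generalizes when no product formula for $H(z)$ is available, as with the double sums $j_{10,M}$ and $j_{11,M}$ later in the paper; your generating-function argument is cleaner here but depends on the summands decoupling so that Euler's identities apply in each variable.
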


\begin{proof}[Proof]
A short calculation verifies the three basic linear relations among the $h$'s, namely
\begin{align}
\label{E:h9recsystem}
h_{c,d,N}-h_{c,d,N-1}&= q^N h_{c-1,d-1,N},\\
\notag
h_{c,d,N}-h_{c,d+2,N}&= -q^{3d+3}h_{c,d+2,N-3},\\
\notag
h_{c,d,N}-h_{c+2,d,N}&= q^{2c-1} h_{c,d,N-2}.
\end{align}
We then plug in various values to obtain the system
\begin{align*}
h_{c,d,N} & = h_{c,d,N-1} + q^N h_{c-1, d-1,N}, \\
h_{c,d,N-1} &= h_{c,d,N-2} + q^{N-1} h_{c-1, d-1, N-1}, \\
h_{c,d,N-2} &= h_{c,d,N-3} + q^{N-2} h_{c-1, d-1, N-2}, \\
h_{c,d,N-3} &=  h_{c,d,N-4} + q^{N-3} h_{c-1, d-1, N-3}, \\
h_{c-1,d-1,N} & = h_{c-1,d-1,N-1} + q^N h_{c-2, d-2, N}, \\
h_{c-1,d-1,N-2} & = h_{c-1,d-1,N-3} + q^{N-2} h_{c-2, d-2, N-2}, \\
h_{c-2,d-2, N} & = h_{c,d-2,N} + q^{2c-5} h_{c-2, d-2, N-2}, \\
h_{c,d-2,N} & =  h_{c,d,N} - q^{3d-3} h_{c, d, N-3}.
\end{align*}
We view this as a system of 8 equations in the 8 variables
$$
h_{c,d,N}, h_{c-1, d-1,N}, h_{c-1,d-1,N-1}, h_{c-1,d-1,N-2}, h_{c-1,d-1,N-3}, h_{c-2,d-2,N}, h_{c-2,d-2,N-2}, h_{c,d-2,N},
$$
with constants $h_{c,d,N-1}, h_{c,d,N-2}, h_{c,d,N-3}, h_{c,d,N-4}$. It is a brief calculation in linear algebra to solve for $h_{c,d,N}$ in terms of these constants, and the result is \eqref{E:hcdlongrec}.

We next consider the special cases listed in parts $(1)$, $(2)$, and $(3)$. In order to do so, we make use of the additional ``shift'' structure in the linear system: if the $h_N$ satisfy a certain linear equation $L(N)$, then they must also satisfy $L(N\pm 1)$. To use this, we assume that $h_N$ satisfies an equation of the form
\begin{equation}
\label{E:hgenrec}
\left(1 - q^{2N}\right) h_N = A_1(N) h_{N-1} + A_2(N) h_{N-2} -q^{a} h_{N-3}.
\end{equation}
In order to use \eqref{E:hcdlongrec}, we then apply \eqref{E:hgenrec} iteratively, to obtain
\begin{multline}\label{E:hgeniter}
\left(1 - q^{2N}\right) h_N
=
\left(A_1(N) +q^{2c-a}\left(1 - q^{2N-2}\right)\right) h_{N-1}
+ \left(A_2(N) -q^{2c-a} A_1(N-1)\right) h_{N-2}  \\
 - \left(q^a + q^{2c-a} A_2(N-1)\right) h_{N-3} + q^{2c}h_{N-4}.
\end{multline}
If we can show that \eqref{E:hgeniter} is equivalent to \eqref{E:hcdlongrec}, then we can conclude that it is also equivalent to \eqref{E:hgenrec}. This is because the shorter recurrence implies the longer recurrence, and the solution of a linear recurrence is uniquely determined by its initial conditions. In this case the initial conditions are always given by $h_{c,d,N} = 0$ for $N < 0$ and $h_{c,d,0} = 1$.

By plugging in the specific polynomials and constants from \eqref{E:d=0rec}, \eqref{E:d=-1rec}, and \eqref{E:d+3/2rec}, we find
in each case that \eqref{E:hgeniter} reduces to
\eqref{E:hcdlongrec}. This proves the three-term recurrences.

\end{proof}

\section{Proof for $H_1$}
\label{S:H1}
In this section we prove \eqref{C:H1}. We treat this case separately as the functional equation for $H_1(x)$ turns out to be more complicated than those for $H_j$ with $2\leq j\leq 9$.

We claim that
\begin{multline}\label{E:H1Rec}
H_1(x)= \left(1+q\left(1+q-q^2\right)x\right) H_1\left(xq^2\right) + xq^3\left(1-xq^2\left(1-q-q^2\right)\right)H_1\left(xq^4\right)\\
+ x^2 q^8\left(1-xq^4\right) H_1\left(xq^6\right).
\end{multline}
For this we write, with $N:=i+2j+3k$ throughout,
\begin{equation*}
(i+2j+3k)(i+2j+3k-1)+3k^2 + i +6j +6k = N^2 + 3k^2 +4j +3k,
\end{equation*}
to obtain
\begin{equation*}
H_1(x) = \sum_{N \geq 0}h_{1,N}\cdot q^{N^2}  x^N,
\end{equation*}
where $h_{1,N} := h_{\frac52,1,N}$.
A direct calculation shows that \eqref{E:H1Rec} is equivalent to
\begin{align}\label{rech1N}
\left(1 - q^{2N}\right)h_{1,N} = \left(1 + q - q^2 + q^{2N}\right)h_{1,N-1} + \left(-q + q^2 + q^3 + q^{2N}\right)h_{1,N-2} - q^3 h_{1,N-3},
\end{align}
which follows from \eqref{E:d+3/2rec} with $d=1$
(as it turns out, $H_1$ is the only identity that uses \eqref{E:d+3/2rec}).

The remainder of the proof is quite similar to that of Proposition \ref{Prop1}. Set
\begin{align*}
B(x) &:= \frac{H_1(x)}{(x;q^2)_\infty},
\end{align*}
so that dividing \eqref{E:H1Rec} by $(xq^4; q^2)_\infty$ yields
\begin{align*}
& (1-x) \left(1-xq^2\right) B(x)
\\
& =
\left(1 + xq + xq^2 - xq^3\right)\left(1-xq^2\right) B\left(xq^2\right)
+xq^3 \left(1 - xq^2 + xq^3 + xq^4\right)B\left(xq^4\right)
+ x^2 q^8 B\left(xq^6\right).
\end{align*}
Writing
\begin{equation*}
B(x) =:\sum_{n\ge0}\beta_nx^n
\end{equation*}
gives
\begin{align}\label{E:H1betarec}
\left(1 - q^{2n}\right) \beta_{n}
&=
\left(1 + q^2 + q^{2n-1} - q^{2n+1} + q^{4n-1}\right) \beta_{n-1}
-q^2 \left(1-q^{2n-1}\right)\left(1+q^{2n-2}\right)\left(1+q^{2n-3}\right)\beta_{n-2}.
\end{align}

Set
\begin{align*}
\gamma_{n} := \frac{\left(q^2; q^2\right)_n}{(q;q)_{2n+1}} \beta_{n},
\qquad\qquad\qquad
C(x) := \sum_{n\ge0}\gamma_n x^n.
\end{align*}
Proposition \ref{P:Appell}
applies to $B(x)$ and $C(x)$, giving that
\begin{align*}
\lim_{x\rightarrow 1^-} \left(1-x\right)B\left(x\right)
&=
\beta_{\infty}
=
\left(q;q^2\right)_\infty \gamma_\infty
=
\left(q;q^2\right)_\infty
\lim_{x\rightarrow 1^-} \left(1-x\right)C\left(x\right)
.
\end{align*}
Multiplying \eqref{E:H1betarec} by $\displaystyle \frac{(q^2;q^2)_{n-1}}{(q;q)_{2n-1}}$ yields
\begin{multline*}
\left(1-q^{2n}\right)\left(1-q^{2n+1}\right) \gamma_{n}
\\= \left(1 + q^2 + q^{2n-1} - q^{2n+1} + q^{4n-1}\right) \gamma_{n-1}
- q^2 \left(1+q^{2n-2}\right)\left(1+q^{2n-3}\right) \gamma_{n-2}.
\end{multline*}
In terms of $C(x)$, this is
\begin{align}\label{E:H1Crec}
(1-x)\left(1-xq^2\right)C(x)
&=
(1+q)(1+xq)\left(1-xq^2\right) C\left(xq^2\right) - q\left(1 - xq^2 + x^2 q^4\right) C\left(xq^4\right).
\end{align}

Set
\begin{equation*}
D(x) := \frac{\left(-x;q^2\right)_\infty \left(x; q^2\right)_\infty}{\left(-x^3; q^6\right)_\infty} C(x).
\end{equation*}
We multiply \eqref{E:H1Crec} by $\frac{(-xq^2;\,q^2)_\infty (xq^4; q^2)_\infty}{(-x^3 q^6; q^6)_\infty}$ to obtain
\begin{align*}
\frac{1+x^3}{1+x}D(x)
&=
(1+q)(1+xq)D\left(xq^2\right) - qD\left(xq^4\right).
\end{align*}
That is,
\begin{align*}
\left(1-x+x^2\right)D(x)
&=
\left(1+q+xq+xq^2\right)D\left(xq^2\right) - qD\left(xq^4\right).
\end{align*}
Writing
\begin{equation*}
D(x) =: \sum_{n\geq 0} \delta_nx^n
\end{equation*}
yields
\begin{align}\label{E:H1deltarec}
\left(1-q^{2n}\right)\left(1-q^{2n+1}\right)\delta_{n}
&=
\left(1+q^{2n-1}+q^{2n}\right)\delta_{n-1} - \delta_{n-2}
.
\end{align}
Note that $\delta_0=\gamma_0=\frac{1}{1-q}$.

Lastly, set
$$
\varepsilon_n := \left(q;q^2\right)_{n+1}\delta_{n}.
$$
Multiplying
\eqref{E:H1deltarec}
by $(q;q^2)_n$ gives
\begin{align*}
\left(1-q^{2n}\right)\varepsilon_{n}
&= \left(1+q^{2n-1}+q^{2n}\right)\varepsilon_{n-1}
- \left(1-q^{2n-1}\right)\varepsilon_{n-2}.
\end{align*}
Setting
\begin{equation*}
E(x) =: \sum_{n\ge0}\varepsilon_n x^n,
\end{equation*}
we obtain
\begin{align*}
E(x)&=\frac{(1+x)(1+xq)\left(1+xq^2\right)}{1+x^3}E\left(xq^2\right).
\end{align*}
Noting that $E(0)=\varepsilon_{0}=1$, we have that
\begin{align*}
E(x)
&=
\frac{(-x;q)_\infty\left(-xq^2;q^2\right)_\infty}{\left(-x^3;q^6\right)_\infty}.
\end{align*}

Using \eqref{E:euler}, we find that
\begin{align*}
E(x)
&=
\sum_{\ell,m,r\ge0}
\frac{(-1)^\ell q^{\frac12m(m-1) + r(r+1)} }
{(q;q)_m \left(q^2;q^2\right)_r \left(q^6;q^6\right)_\ell}x^{m+r+3\ell}.
\end{align*}
Thus
\begin{align*}
\varepsilon_{n}
&=
\sum_{r+m+3\ell=n}
\frac{(-1)^\ell q^{\frac12 m(m-1) + r(r+1)} }
{(q;q)_m \left(q^2;q^2\right)_r \left(q^6;q^6\right)_\ell}
,\qquad\quad
\delta_{n}
=
\sum_{r+m+3\ell=n}
\frac{(-1)^\ell q^{\frac12 m(m-1) + r(r+1)} }
{(q;q)_m \left(q^2;q^2\right)_r \left(q^6;q^6\right)_\ell \left(q;q^2\right)_{n+1}}
.
\end{align*}

We then have that
\begin{align}\notag
D(x)
&=
\sum_{\substack{n\ge0,\\ m+r+3\ell=n}}\!\!
\frac{(-1)^\ell  q^{\frac12 m(m-1)+r(r+1)} }
{(q;q)_m \left(q^2;q^2\right)_r \left(q^6;q^6\right)_\ell \left(q;q^2\right)_{n+1}}x^{n}
\\
&=
\sum_{n,r,\ell\ge 0}\!\!
\frac{(-1)^\ell q^{\frac12 n(n-1)+r(r+1)} }
{(q;q)_n \left(q^2;q^2\right)_r \left(q^6;q^6\right)_\ell \left(q;q^2\right)_{n+r+3\ell+1}} x^{n+r+3\ell}
,\label{E:H1eqD1}
\end{align}
changing $n\mapsto n+r+3\ell$ in the second line.
We apply \eqref{E:heine} to the inner sum on $r$, finding
it is equal to
\begin{align*}
&\frac{1}{\left(q;q^2\right)_{n+3\ell+1}}
\lim_{\substack{a\rightarrow \infty\\b\rightarrow 0 }}
{_2\phi_1}\Bigg[\begin{matrix}
a,b\\
q^{2n+6\ell+3}\end{matrix};q^2,\frac{-xq^2}{a}\Bigg]
\\
&=
	\frac{1}{\left(q;q^2\right)_{n+3\ell+1}}
	 \frac{\left(-xq^2;q^2\right)_\infty}{\left(q^{2n+6\ell+3};q^2\right)_\infty}
	\lim_{b\rightarrow 0}
	{_2\phi_1}\Bigg[\begin{matrix}
	\frac{q^{2n+6\ell+3}}{b}, 0\\
	-xq^2\end{matrix};q^2,b\Bigg]
=
	\frac{\left(-xq^2;q^2\right)_\infty}{\left(q;q^2\right)_\infty}
	\sum_{r\ge0}\frac{ (-1)^r q^{r^2+2r+2nr+6\ell r}  }
	{\left(-xq^2,q^2;q^2\right)_r}.
\end{align*}
Inserting this into \eqref{E:H1eqD1},
and evaluating the inner sums on $n$ and $\ell$ with \eqref{E:euler},
we have
\begin{align}\label{E:H1eqD2}
D(x)
&=
\frac{\left(-xq^2;q^2\right)_\infty}{\left(q;q^2\right)_\infty}
\sum_{\substack{n,r,\ell\ge 0}}
\frac{(-1)^{r+\ell}  q^{\frac12 n(n-1) +r^2+2r+2nr+6\ell r }  }
{(q;q)_n \left(-xq^2,q^2;q^2\right)_r \left(q^6;q^6\right)_\ell}x^{n+3\ell}
\nonumber\\
&=
\frac{\left(-xq^2;q^2\right)_\infty}{\left(q;q^2\right)_\infty}
\sum_{r\ge0}
	\frac{(-1)^r q^{r^2+2r}  }{ \left(-xq^2,q^2;q^2\right)_r }
\sum_{n\ge0}
	\frac{  q^{\frac12n(n-1) +2nr} }{(q;q)_n}x^n
\sum_{\ell\ge0}
	\frac{ (-1)^{\ell} q^{6\ell r} } {\left(q^6;q^6\right)_\ell}x^{3\ell}
\nonumber\\
&=
\frac{\left(-xq^2;q^2\right)_\infty}{\left(q;q^2\right)_\infty}
\sum_{r\ge0}
	\frac{(-1)^r q^{r^2+2r}  }{ \left(-xq^2,q^2;q^2\right)_r }
	\left(-xq^{2r};q\right)_\infty
	\frac{1} {\left(-x^3q^{6r};q^6\right)_\infty}
\nonumber\\
&=
\frac{(-x;q)_\infty\left(-xq^2;q^2\right)_\infty}{\left(-x^3;q^6\right)_\infty \left(q;q^2\right)_\infty }
\sum_{r\ge0}
\frac{(-1)^r \left(-x^3;q^6\right)_r q^{r^2+2r}  }{(-x;q)_{2r} \left(-xq^2,q^2;q^2\right)_r }
.
\end{align}

We claim that
\begin{align}\label{EqH1Final}
\sum_{r\ge0}\frac{(-1)^r \left(-1;q^6\right)_r q^{r^2+2r}  }{(-1;q)_{2r} \left(-q^2,q^2;q^2\right)_r }
&=
\frac{ \left(q;q^2\right)_\infty \left(q^2,q^{10},q^{12};q^{12}\right)_\infty  }{ \left(q^2;q^2\right)_\infty \left(q,q^{11};q^{12}\right)_\infty   }
.
\end{align}
This follows, letting $q\mapsto -q$ in the following identity of McLaughlin and Sills \cite[(1.12)]{MS}
\begin{align*}
\sum_{r\ge0} \frac{ \left(-1;q^6\right)_r (-q;q^2)_r q^{r^2+2r}  }{ (-1;q^2)_{r} (q^2;q^2)_{2r} }
&=
\frac{ \left(-q;q^2\right)_\infty \left(q,q^{11},q^{12};q^{12}\right)_\infty \left(q^{10},q^{14};q^{24}\right)_\infty  }
{ \left(q^2;q^2\right)_\infty}
.
\end{align*}
This yields that
\begin{equation*}
D(1) =
\frac{(-1;q)_\infty \left(-q^2;q^2\right)_\infty \left(q^2,q^{10},q^{12};q^{12}\right)_\infty}
{\left(-1;q^6\right)_\infty\left(q^2;q^2\right)_\infty\left(q,q^{11};q^{12}\right)_\infty}.
\end{equation*}

We now evaluate $H_1(1)$.
By Proposition \ref{P:Appell}, and recalling \eqref{E:H1eqD2} and \eqref{EqH1Final}, we have
\begin{align*}
H_1(1)
&=
\left(q^2; q^2\right)_\infty \lim_{x\rightarrow1^-}(1-x)B(x)
=
\left(q; q\right)_\infty
\lim_{x\rightarrow1^-}(1-x)C(x)
=
\frac{(q;q)_\infty \left(-1; q^6\right)_\infty}{\left(-1;q^2\right)_\infty \left(q^2; q^2\right)_\infty} D(1)
\\
&=
\frac{1}{\left(q,q^4, q^6, q^8, q^{11}; q^{12}\right)_\infty}.
\end{align*}
This completes the proof of \eqref{C:H1}.

\section{Proofs for $H_j$ with $2\leq j\leq 11$}
\label{S:H2-11}

In the following subsections, we give short explanations of the results for
these functions. The functions $H_j$ for $2\leq j\leq 9$ use Proposition \ref{Prop1},
while $H_{10}$ and $H_{11}$ require Proposition \ref{Prop2}.

\subsection{$H_2$}
As noted in \cite[Theorems 3.3.1 and 3.3.2]{KR18}, the identity for
$H_2(1)$ follows from the identities for $H_1(1)$ and $H_3(1)$. As such,
we skip $H_2(1)$.

\subsection{$H_3$}
We claim that
\begin{align}
\label{E:H3Rec}
H_3(x)=\left(1+xq^4\right) H_3\left(xq^2\right) + xq^5 \left(1+xq^3\right) H_3\left(xq^4\right) + x^2 q^{12} \left(1-xq^4\right) H_3\left(xq^6\right).
\end{align}
For this we write
\begin{align*}
(i+2j+3k)(i+2j+3k-1)+3k^2 + 4i+6j+12k = N^2 +3N +3k^2 -2j,
\end{align*}
to obtain that
\begin{align*}
H_3(x) = \sum_{N \geq 0} h_{3,N} \cdot  q^{N^2+3N} x^N,
\end{align*}
where $h_{3,N} := h_{-\frac12,0,N}$.
We find that \eqref{E:H3Rec} is equivalent to
\begin{align*}
\left(1-q^{2N}\right)h_{3,N} &= \left(1+q^{2N-1}\right) h_{3,N-1} + \left(q^{-2}+q^{2N-2}\right)h_{3,N-2} - q^{-2} h_{3,N-3}.
\end{align*}
This is \eqref{E:d=0rec} with $c = -\frac12.$

We next apply Proposition \ref{Prop1} with $a=2$ and $b=3$,
and find that
\begin{align*}
H_3(1)
&=
	\left(q^6;q^4\right)_\infty\sum_{n\ge0}
	\frac{ \left(q^3;q^6\right)_n q^{n^2+3n}}{\left(q,q^2;q^2\right)_{n} \left(q^6;q^4\right)_n}
.
\end{align*}
We rewrite the sum as
\begin{align*}
\sum_{n \geq 0} \frac{\left(-q^2; q^2\right)_n \left(q^3; q^6\right)_n q^{n^2 + 3n} }
{\left(q; q^2\right)_n \left(q^4; q^4\right)_n \left(q^6; q^4\right)_n}
&=
\left(1-q^2\right) \sum_{n \geq 0} \frac{ \left(-q^2; q^2\right)_n \left(q^3; q^6\right)_n q^{n^2 + 3n}}
{\left(q;q^2\right)_n \left(q^2; q^2\right)_{2n+1}}\\
&= \frac{1}{\left(q^4, q^5, q^6, q^6, q^7, q^8, q^{10},q ^{14}; q^{12}\right)_\infty},
\end{align*}
using identity (1.30) of \cite{MS}. This proves \eqref{C:H3}.

\subsection{$H_4$}
We claim that
\begin{align}
\label{E:H4Rec}
H_4(x)=(1+xq) H_4\left(xq^2\right) + xq^2 \left(1+xq^3\right) H_4\left(xq^4\right) + x^2 q^6 \left(1-xq^4\right) H_4\left(xq^6\right).
\end{align}
For this we write
\begin{align*}
(i+2j+3k)(i+2j+3k-1)+3k^2+i+3j+3k = N^2+3k^2+j
\end{align*}
to obtain that
\begin{align*}
H_4(x) = \sum_{N \geq 0}  h_{4,N} \cdot  q^{N^2} x^N,
\end{align*}
where $h_{4,N} := h_{1,0,N}$.
We find that \eqref{E:H4Rec} is equivalent to
\begin{align*}
\left(1-q^{2N}\right) h_{4,N} &= \left(1+q^{2N-1}\right)h_{4,N-1} +\left(q+q^{2N-2}\right)h_{4,N-2}-q h_{4,N-3}.
\end{align*}
This is \eqref{E:d=0rec} with $c = 1$.  Proposition \ref{Prop1} with $a=-1$ and $b=3$
then implies \eqref{T:H4}.

\subsection{$H_5$}
We claim that
\begin{align}
\label{E:H5Rec}
H_5(x)=\left(1+xq^3\right) H_5\left(xq^2\right) + xq (q+x) H_5\left(xq^4\right) + x^2 q^6 \left(1-xq^2\right) H_5\left(xq^6\right).
\end{align}
For this we write
\begin{align*}
	(i+2j+3k)(i+2j+3k-1)+3k^2+2i-j+3k = N^2+N+3k^2-5j-3k
\end{align*}
to obtain that
\begin{align*}
H_5(x) = \sum_{N \geq 0 }  h_{5,N} \cdot q^{N^2+N} x^N,
\end{align*}
where $h_{5,N} := h_{-2,-1,N}$.
We find that \eqref{E:H5Rec} is equivalent to
\begin{align*}
\left(1-q^{2N}\right) h_{5,N} &= \left(q+q^{2N-2}\right) h_{5,N-1} + \left(q^{-5}+q^{2N-4}\right)h_{5,N-2}-q^{-4}h_{5,N-3}.
\end{align*}
This is \eqref{E:d=-1rec} with $c = -2$.

%Proposition \ref{Prop1} does not directly apply to $H_5(x)$.
%In particular, it is not valid to divide the functional equation for $H_5(x)$ by $(xq^{-2};q^2)_\infty$ and apply Appell's Comparison Theorem, because the resulting series has radius of convergence $|q|^2<1$.
Proposition \ref{Prop1} does not directly apply to $H_5(x)$,
and unlike with $H_1(x)$, we cannot adapt the proof.
In particular, the first reasonable step would be to
divide the functional equation for $H_5(x)$ by $(xq^{-2};q^2)_\infty$,
but then it is not valid to apply Appell's Comparison Theorem,
because the resulting series has radius of convergence $|q|^2<1$.
As such, we introduce another function that is related to $H_5(x)$
and to which Proposition \ref{Prop1} does apply.
We set
\begin{align*}
J_5\left(x\right) &:= H_5\left(x\right)-xq^3H_5\left(xq^2\right).
\end{align*}
The idea is to apply Proposition \ref{Prop1} to $H_5(xq^2)$
and $J_5(x)$ separately and obtain from this a formula for $H_5(1)$.

From \eqref{E:H5Rec}, we obtain that
\begin{align*}
H_5\left(xq^2\right)
&=
	\left(1+xq^5\right)H_5\left(xq^4\right)
	+xq^4(1+xq)H_5\left(xq^6\right)
	+x^2q^{10}\left(1-xq^4\right)H_5\left(xq^8\right)
,
\end{align*}
and so applying Proposition \ref{Prop1} with $a=3$ and $b=1$ to $A(x)= H_5(xq^2)$ gives that
\begin{align*}
H_5\left(q^2\right)
&=
	\left(q^3;q^4\right)_\infty
	\sum_{n\ge0}
	\frac{\left(q^{-3};q^6\right)_n  q^{ n^2 + 4n}}
	{\left(q^{-1},q^2;q^2\right)_{n}  \left(q^{3};q^4\right)_n }
.
\end{align*}

We claim that
\begin{align*}
J_5\left(x\right)
&=
	\left(1+xq^5\right)J_5\left(xq^2\right)
	+xq\left(q+x\right)J_5\left(xq^4\right)
	+x^2q^6\left(1-xq^4\right)J_5\left(xq^6\right)
.
\end{align*}
This follows from the functional equation for $H_5$
by shifting $x \mapsto xq^2$ in \eqref{E:H5Rec}, multiplying by $xq^5$, and then subtracting
the resulting equation from \eqref{E:H5Rec}.

We apply Proposition \ref{Prop1} with $a=3$ and $b=-1$ to find that
\begin{align*}
J_5(1)
&=
	\left(q^{-1};q^4\right)_\infty
	\sum_{n\ge0}
	\frac{\left(q^{-9};q^6\right)_n  q^{ n^2 + 4n}  }
	{(q^{-3},q^2;q^2)_{n}  \left(q^{-1};q^4\right)_n }
.
\end{align*}
Thus
\begin{align*}
H_5(1)
&=
	\left(q^{-1};q^4\right)_\infty
	\sum_{n\ge0}
	\frac{ \left(q^{-9};q^6\right)_n  q^{ n^2 + 4n} }
	{(q^{-3},q^2;q^2)_{n}  \left(q^{-1};q^4\right)_n }
	+
	\left(q^3;q^4\right)_\infty
	\sum_{n\ge0}
	\frac{  \left(q^{-3};q^6\right)_n q^{ n^2 + 4n+3} }
	{(q^{-1},q^2;q^2)_{n}  \left(q^{3};q^4\right)_n }
.
\end{align*}
By isolating the $n=0$ term and then shifting $n\mapsto n+1$, we find that
\begin{align*}	
\sum_{n\ge0}
\frac{  \left(q^{-9};q^6\right)_n  q^{ n^2 + 4n}}
{(q^{-3},q^2;q^2)_{n}  \left(q^{-1};q^4\right)_n }
&=
1+\frac{1+q^{-3}+q^{-6}}{1-q^{-1}}
\sum_{n\ge0}
\frac{ \left(q^{-3};q^6\right)_n q^{ n^2 + 6n+5}  }
{\left(1-q^{2n+2}\right)\left(q^{-1},q^2;q^2\right)_{n}  \left(q^3;q^4\right)_n }
,
\end{align*}
Thus
\begin{align*}
H_5(1)
&=
	\left(q^{-1};q^4\right)_\infty
	+
	\left(q^3;q^4\right)_\infty
	\sum_{n\ge0}
	\frac{\left(1+q^{2n-4}+q^{2n-1}\right) \left(q^{-3};q^6\right)_n q^{ n^2 + 4n+3} }
	{\left(1-q^{2n+2}\right)\left(q^{-1},q^2;q^2\right)_{n}  \left(q^{3};q^4\right)_n }
.
\end{align*}
This proves \eqref{T:H5}.

\subsection{$H_6$}
We claim that
\begin{align}
\label{E:H6Rec}
H_6(x)=\left(1+xq^2\right) H_6\left(xq^2\right) + xq \left(1+xq\right) H_6\left(xq^4\right) + x^2 q^4 \left(1-xq^4\right) H_6\left(xq^6\right).
\end{align}
For this we write
\begin{align*}
	(i+2j+3k)(i+2j+3k-1)+3k^2+i = N^2+3k^2-3k-2j
\end{align*}
to obtain that
\begin{align*}
H_6(x) = \sum_{N \geq 0}  h_{6,N} \cdot q^{N^2} x^N,
\end{align*}
where $h_{6,N} := h_{-\frac12,-1,N}$.
We find that \eqref{E:H6Rec} is equivalent to
\begin{align*}
\left(1-q^{2N}\right)h_{6,N}&=\left(q+q^{2N-2}\right)h_{6,N-1} +\left(q^{-2}+q^{2N-4}\right)h_{6,N-2} - q^{-1}h_{6,N-3}.
\end{align*}
This is \eqref{E:d=-1rec} with $c = -\frac12$. Proposition \ref{Prop1} with $a=0$ and $b=1$ then implies \eqref{C:H6}.

\subsection{$H_7$}
We claim that
\begin{align}
\label{E:H7Rec}
H_7(x)=\left(1+xq^2\right) H_7\left(xq^2\right) + xq^3 \left(1+xq^3\right) H_7\left(xq^4\right) + x^2 q^8 \left(1-xq^4\right) H_7\left(xq^6\right).
\end{align}
For this we write
\begin{align*}
	(i+2j+3k)(i+2j+3k-1)+3k^2+2i+4j+6k = N^2+N+3k^2
\end{align*}
to obtain that
\begin{align*}
H_7(x) = \sum_{N \geq 0} h_{7,N} \cdot q^{N^2+N} x^N,
\end{align*}
where
$h_{7,N} := h_{\frac12,0,N}$.
We find that \eqref{E:H7Rec} is equivalent to
\begin{align*}
\left(1-q^{2N}\right)h_{7,N} &= \left(1+q^{2N-1}\right)h_{7,N} + \left(1+q^{2N-2}\right)h_{7,N-2}-h_{7,N-3}.
\end{align*}
This is \eqref{E:d=0rec} with $c = \frac12$.
Proposition \ref{Prop1} with $a=0$ and $b=3$ then implies \eqref{C:H7}.

\subsection{$H_9$}

We next consider $H_9$, as we subsequently need these calculations for $H_8$.
We claim that
\begin{equation}
\label{E:H9Rec}
H_9(x) = \left(1+xq^3\right) H_9\left(xq^2\right) + xq^{4}\left(1+xq^3\right) H_9\left(xq^4\right) + x^2 q^{10} \left(1-xq^4\right) H_9\left(xq^6\right).
\end{equation}
For this, we write
\begin{align*}
	(i+2j+3k)(i+2j+3k-1)+3k^2+3i+5j+9k = N^2+2N+3k^2-j
\end{align*}
to obtain that
\begin{align*}
H_9(x)= \sum_{N \geq 0}  h_{9,N} \cdot q^{N^2+2N} x^N,
\end{align*}
where $h_{9,N} := h_{0,0,N}$.
We find that \eqref{E:H9Rec} is equivalent to
\begin{align*}
\left(1-q^{2N}\right)h_{9,N}
=\left(1+ q^{2N-1}\right) h_{9,N-1} + \left(q^{-1}+ q^{2N-2} \right)h_{9,N-2}  - q^{-1} h_{9,N-3}.
\end{align*}
This is \eqref{E:d=0rec} with $c=0$.
Proposition \ref{Prop1} with $a=1$ and $b=3$ then implies
\eqref{T:H9}.

\subsection{$H_8$}

We claim that
\begin{align}
\label{E:H8rec}
H_8(x)= (1+xq) H_8\left(xq^2\right) + xq^2(1+xq) H_8\left(xq^4\right) + x^2q^6\left(1-xq^2\right) H_8\left(xq^6\right).
\end{align}
For this we write
\begin{align*}
(i+2j+3k)(i+2j+3k-1)+3k^2+i+j+3k = N^2+3k^2-j
\end{align*}
to obtain that
\begin{align*}
H_8(x)= \sum_{ N \geq 0}   h_{8,N} \cdot q^{N^2} x^N,
\end{align*}
where $h_{8,N} := h_{9,N}$
We find that \eqref{E:H8rec} is equivalent to
\begin{align*}
\left(1-q^{2N}\right)h_{8,N} = \left(1+ q^{2N-1}\right) h_{8,N-1} + \left(q^{-1}+ q^{2N-2}\right) h_{8,N-2} - q^{-1} h_{8,N-3}.
\end{align*}
Note that $h_{8,N} = h_{9,N}$ and so the functional equation is the same.

As with $H_5$, we cannot apply Proposition \ref{Prop1} directly.
We set
\begin{align*}
J_8\left(x\right) &:= H_8\left(x\right)-xqH_8\left(xq^2\right),
\end{align*}
and note that $H_8(xq^2)=H_9(x)$. Equation \eqref{T:H9} then gives a
$q$-hypergeometric series representation for $H_8(q^2)$. To find a $q$-hypergeometric series
representation for $J_8(1)$, we use the recurrence
\begin{align*}
J_8\left(x\right)
&=
	\left(1+xq^3\right)J_8\left(xq^2\right)
	+xq^2\left(1+xq\right)J_8\left(xq^4\right)
	+x^2q^6\left(1-xq^4\right)J_8\left(xq^6\right)
.
\end{align*}
This follows by setting $x \mapsto xq^2$ in \eqref{E:H8rec}, multiplying by $xq^3$, and
then subtracting the resulting equation from \eqref{E:H8rec}.

We apply Proposition \ref{Prop1} with $a=b=1$ to find that
\begin{align*}
J_8(1)
&=
	\left(q;q^4\right)_\infty
	\sum_{n\ge0}
	\frac{ \left(q^{-3};q^6\right)_n  q^{ n^2 + 2n} }
	{\left(q^{-1},q^2;q^2\right)_{n}  \left(q;q^4\right)_n }
.
\end{align*}
Thus
\begin{align*}
H_8(1)
&=
	J_8(1)+qH_9(1)
=
	\left(q;q^4\right)_\infty
	\sum_{n\ge0}
	\frac{ \left(q^{-3};q^6\right)_n   q^{ n^2 + 2n}}
	{\left(q^{-1},q^2;q^2\right)_{n}  \left(q;q^4\right)_n }
	+
	\left(q^5;q^4\right)_\infty
	\sum_{n\ge0}
	\frac{ \left(q^{3};q^6\right)_n  q^{ n^2 + 2n+1} }
	{\left(q,q^2;q^2\right)_{n}  \left(q^5;q^4\right)_n }
.
\end{align*}
To finish the prove we rewrite the first sum, splitting off the $n=0$ term, as
\begin{equation*}
\left(q;q^4\right)_\infty + \left(q^5;q^4\right)_\infty \frac{1-q^{-3}}{1-q^{-1}} \sum_{n \geq 0} \frac{ \left(q^3;q^6\right)_{n}q^{n^2 + 4n + 3}}{\left(q;q^2\right)_{n} (q^2; q^2)_{n+1} \left(q^5; q^4\right)_{n}}.
\end{equation*}
Combining proves \eqref{T:H8}.

\subsection{$H_{10}$}

To begin we rewrite, using \eqref{E:euler},
\begin{align*}
&H_{10}(1)
=
\sum_{i,j,k\ge0}
\frac{ q^{\frac12i(i-1)+i+2ij+3ik +\frac12(2j+3k)(2j+3k-1)+j^2+2j+4k} }
{(q;q)_i(q^2;q^2)_j(q^3;q^3)_k}
\\
&=
\sum_{j,k\ge0}
\frac{ \left(-q^{1+2j+3k};q\right)_\infty q^{\frac12(2j+3k)(2j+3k-1)+j^2+2j+4k} }
{(q^2;q^2)_j(q^3;q^3)_k}
=
(-q;q)_\infty
\sum_{j,k\ge0}
\frac{ q^{\frac12(2j+3k)(2j+3k-1)+j^2+2j+4k} }
{(-q;q)_{2j+3k}(q^2;q^2)_j(q^3;q^3)_k}
.
\end{align*}
We set
\begin{align*}
J_{10}(x)
&:=
\sum_{j,k\ge0}
\frac{q^{\frac12(2j+3k)(2j+3k-1)+j^2+2j+4k} }
{(-q;q)_{2j+3k}(q^2;q^2)_j(q^3;q^3)_k}x^{2j+2k}
,
\end{align*}
and claim that
\begin{align}\label{E:J10Rec}
J_{10}(x)
&=
	 q^{-2}\left(1+q^2\right)\left(1+x^2q^6\right)J_{10}\left(xq^3\right)
	 -q^{-2}\left(1+x^2q^7\right)\left(1+x^2q^{11}\right)J_{10}\left(xq^6\right)
.	
\end{align}
To prove \eqref{E:J10Rec} we write, with $M:=j+k$ throughout,
\begin{equation*}
\frac12(2j+3k)(2j+3k-1)+j^2+ 2j + 4k = 3M^2+M+\frac{3k^2}{2}+\frac{3k}{2}.
\end{equation*}
This yields that
\begin{equation*}
J_{10}(x) = \sum_{M\geq 0}  j_{10,M} \cdot q^{3M^2+M} x^{2M},
\end{equation*}
where
\begin{equation*}
j_{10,M} := \sum_{k\geq 0} \frac{q^{\frac32 k(k+1)}}{(-q;q)_{2M+k} \left(q^2;q^2\right)_{M-k} \left(q^3;q^3\right)_{k}},
\end{equation*}
which also gives that $j_{10,M} = 0$ for $M < 0$, and $j_{10,0} = 1$.
We find that \eqref{E:J10Rec} is equivalent to
\begin{equation}
\label{E:j10rec}
\left(1 - q^{6M}\right) \left(1 - q^{6M-2}\right) j_{10,M} =
\left(1 + q^2 - q^{6M-5} - q^{6M-1}\right) j_{10,M-1} - q^2 j_{10, M-2}.
\end{equation}
\sloppy
We prove \eqref{E:j10rec} using the $q$-Zeilberger algorithm, as
implemented in MAPLE's \texttt{QDifferenceEquations} package. Set
\begin{align*}
f_{10,k,M} &:=
\frac{q^{\frac32 k(k+1)}}{\left(-q;q\right)_{2M+k}\left(q^2;q^2\right)_{M-k}\left(q^3;q^3\right)_{k}}
,\\
g_{10,k,M}
&:=
	\frac{q^6\left( q^{4M+2} - q^{4M+3k-1}(1+q) - q^{6M+4k+2} + q^{6M+k+3}(1+q) + q^{8M+2k+6}(1-q^2)\right)}
		 {\left(1+q^{2M+k+3}\right)\left(1+q^{2M+k+2}\right)\left(1+q^{2M+k+1}\right)\left(q^{2k}-q^{2M+2}\right)\left(q^{2M+4}-q^{2k}\right)}
	\\&\quad\times
	\left(1-q^{3k}\right)f_{10,k,M}
.
\end{align*}
Elementary rearrangements then reveal that
\begin{align}\label{E:H10WZpair}
&\left(1-q^{6M+10}\right)\left(1-q^{6M+12}\right)f_{10,k,M+2}
-\left(1+q^2-q^{6M+7}-q^{6M+11}\right)f_{10,k,M+1}
+q^2f_{10,k,M}
\nonumber\\
&= g_{10,k+1,M} - g_{10,k,M}.
\end{align}
We note that $g_{10,0,M}=0$ and $\lim_{k\rightarrow\infty}g_{10,k,M}=0$,
so that summing \eqref{E:H10WZpair} over $k$ implies
\eqref{E:j10rec}. The $q$-Zeilberger algorithm is an effective tool for verifying the recurrence satisfied by $J_{10}$
(and $J_{11}$, as seen in the next subsection) as $j_{10,M}$ has just one summation variable and one additional parameter $M$.
\fussy

Proposition \ref{Prop2} applied to $J_{10}$ with
$a=-2$, $b=4$, $c=6$, $\alpha_0=1$, and $\alpha_1=0$
implies that
\begin{align*}
H_{10}(1)
&=
(-q;q)_\infty J_{10}(1)
=
(-q;q)_\infty\left(-q^5;q^6\right)_\infty
{_2\phi_1}\left[\begin{matrix} q^{-1}, q\\ q^4 \end{matrix};q^6,-q^5\right].
\end{align*}
The $_2\phi_1$ can then be evaluated using \eqref{E:kummer} (with $a = q^{-1}, b=q,$ and $q \mapsto q^6$), and we obtain
\begin{align*}
H_{10}(1)
&=
\frac{\left(-q;q\right)_\infty \left(-q^5;q^6\right)_\infty \left(q^5,q^9,q^{12};q^{12}\right)_\infty }{\left(-q^5,q^4,q^{6};q^{6}\right)_\infty } =\frac{1}{\left(q;q^3\right)_\infty \left(q^3,q^{6},q^{11};q^{12}\right)_\infty }
.
\end{align*}
This proves \eqref{C:H10}.

\subsection{$H_{11}$}
As with $H_{10}$, by \eqref{E:euler},
we have that
\begin{align*}
H_{11}(1)
&=
\left(-q^2;q\right)_\infty
\sum_{j,k\ge0}
\frac{ q^{\frac12(2j+3k)(2j+3k-1)+j^2+4j+5k} }
{(-q^2;q)_{2j+3k}(q^2;q^2)_j(q^3;q^3)_k}
.
\end{align*}
We set
\begin{align*}
J_{11}(x)
:=
\sum_{j,k\ge0}
\frac{ q^{\frac12(2j+3k)(2j+3k-1)+j^2+4j+5k} }
{(-q^2;q)_{2j+3k}(q^2;q^2)_j(q^3;q^3)_k}x^{2j+2k}
\end{align*}
and claim that
\begin{align}\label{E:J11Rec}
J_{11}(x)
&=
	\left(1+q^2\right)\left(1+x^2q^6\right)J_{11}\left(xq^3\right)
	 -q^{2}\left(1+x^2q^7\right)\left(1+x^2q^{11}\right)J_{11}\left(xq^6\right)
.	
\end{align}
To show \eqref{E:J11Rec}, we write
\begin{equation*}
\frac12(2j+3k)(2j+3k-1)+j^2+ 4j + 5k = 3M^2+3M+\frac{3k^2}{2}+\frac{k}{2}.
\end{equation*}
Thus
\begin{equation*}
J_{11}(x) = \sum_{M\geq 0}  j_{11,M} \cdot q^{3M^2+3M} x^{2M},
\end{equation*}
where
\begin{equation*}
j_{11,M} := \sum_{k\geq 0} \frac{q^{\frac12 k(3k+1)}}{(-q^2;q)_{2M+k} \left(q^2;q^2\right)_{M-k} \left(q^3;q^3\right)_{k}}.
\end{equation*}
We find that \eqref{E:J11Rec} is equivalent to
\begin{equation}\label{E:j11rec}
\left(1 - q^{6M}\right) \left(1 - q^{6M+2}\right) j_{11,M} =
\left(1 + q^2 - q^{6M-3} - q^{6M+1}\right) j_{11,M-1} - q^2 j_{11, M-2}.
\end{equation}
As with $j_{10,M}$, we prove this recurrence using the $q$-Zeilberger algorithm.
Set
\begin{align*}
f_{11,k,M} &:=
\frac{q^{\frac12 k(3k+1)}}{\left(-q;q\right)_{2M+k+1}\left(q^2;q^2\right)_{M-k}\left(q^3;q^3\right)_{k}}
,\\
g_{11,k,M}
&:=
	\frac{q^7\left( q^{4M+1} - q^{4M+3k}\left(1+q^2\right) - q^{6M+4k+4} + q^{6M+k+4}(1+q) + q^{8M+2k+8}\left(1-q^2\right)\right)}
		 {\left(1+q^{2M+k+4}\right)\left(1+q^{2M+k+3}\right)\left(1+q^{2M+k+2}\right)\left(q^{2k}-q^{2M+2}\right)\left(q^{2M+4}-q^{2k}\right)}
	\\&\quad\times
	\left(1-q^{3k}\right)f_{11,k,M}
.
\end{align*}
Elementary rearrangements reveal that
\begin{align}\label{E:H11WZpair}
&\left(1-q^{6M+12}\right)\left(1-q^{6M+14}\right)f_{11,k,M+2}
-\left(1+q^2-q^{6M+9}-q^{6M+13}\right)f_{11,k,M+1}
+q^2f_{11,k,M}
\nonumber\\
&= g_{11,k+1,M} - g_{11,k,M}.
\end{align}
We note that $g_{11,0,M}=0$, $\lim_{k\rightarrow\infty}g_{11,k,M}=0$,
and $j_{11,M}=(1+q)\sum_{k\ge0}f_{11,k,M}$,
so that summing \eqref{E:H11WZpair} over $k$ implies
\eqref{E:j11rec}.

Proposition \ref{Prop2} with
$a=2$, $b=6$, $c=8$, $\alpha_0=1$, and $\alpha_1=0$
implies that
\begin{align*}
H_{11}(1)
&=
(-q^2;q)_\infty J_{11}(1)
=
\left(-q^2;q\right)_\infty \left(-q^5;q^6\right)_\infty
{_2\phi_1}\left[\begin{matrix} q^3, q \\ q^8 \end{matrix};q^6,-q^5\right].
\end{align*}
The $_2\phi_1$ again evaluates to a product by \eqref{E:kummer}, and in particular,
\begin{align*}
H_{11}(1)
=
\frac{\left(-q^2;q\right)_\infty \left(-q^5;q^6\right)_\infty \left(q^9,q^{13},q^{12};q^{12}\right)_\infty }{\left(-q^5,q^8,q^{6};q^{6}\right)_\infty }
&=
\frac{1}{\left(q^2;q^3\right)_\infty \left(q^3,q^{6},q^{7};q^{12}\right)_\infty }
.
\end{align*}
This proves \eqref{C:H11}.

\section{A Partial Reduction of Another Conjecture}
\label{S:J12}

In \cite[Section 5.2.1]{KR18}, Kanade and Russell also conjectured that
\begin{align*}
\sum_{i,j,k\geq 0}
\frac{ \left(1+q^{2i+4j+6k+2}-q^{3i+6j+9k+5}\right)q^{\frac12(i+2j+3k)(i+2j+3k-1)+i+j^2+2j+2k} }
	{(q;q)_i\left(q^2;q^2\right)_j\left(q^3;q^3\right)_k}
&= \frac{1}{\left(q^2;q^3\right)_\infty \left(q,q^6,q^9;q^{12}\right)_\infty}
.
\end{align*}
This combinatorial form of this conjecture was originally stated as Identity $I_{5a}$ in \cite{Rus}.
In this section we separate the sum into three series, and reduce two of them to single series. We set
\begin{align*}
J_{12,a}(x)
&:=
	\sum_{j,k\ge0} \frac{ q^{\frac12(2j+3k)(2j+3k-1)+j^2+(2a+2)j+(3a+2)k }}
	{(-q^{a+1};q)_{2j+3k}(q^2;q^2)_j(q^3;q^3)_k}x^{2j+2k}
,
\end{align*}
and rewrite the conjecture, using \eqref{E:euler} for each summand of the left-hand side of the conjecture, as
\begin{align*}
(-q;q)_\infty J_{12,0}(1)+q^2(-q^3;q)_\infty J_{12,2}(1) - q^5(-q^4;q)_\infty J_{12,3}(1)
&=
\frac{1}{\left(q^2;q^3\right)_\infty \left(q,q^6,q^9;q^{12}\right)_\infty}
.
\end{align*}

We claim that
\begin{align}
\label{E:H12J0Rec}
J_{12,0}(x)
&=
	\left(1+q^{-2}+x^2q^4+x^2q^8\right)J_{12,0}\left(xq^3\right)
	 -q^{-2}\left(1+x^2q^9\right)\left(1+x^2q^{11}\right)J_{12,0}\left(xq^{6}\right)
,\\
\label{E:H12J2Rec}
J_{12,2}(x)
&=
	\left(1+q^{4}+x^2q^8+x^2q^{10}\right)J_{12,2}\left(xq^3\right)
	 -q^4\left(1+x^2q^9\right)\left(1+x^2q^{11}\right)J_{12,2}\left(xq^{6}\right)
.
\end{align}
Writing
\begin{align*}
\frac12(2j+3k)(2j+3k-1)+ j^2 +(2a+2)j+(3a+2)k
&=
	3M^2+(2a+1)M+\frac12 k(3k+2a-1)
,
\end{align*}
we obtain that
\begin{align*}
J_{12,a}(x)
&=
	\sum_{M\ge0}j_{12,a,M} \cdot q^{3M^2+(2a+1)M} x^{2M}
,
\end{align*}
where
\begin{align*}
j_{12,a,M} &:= \sum_{k\ge0}\frac{q^{\frac12k(3k+2a-1)}}
	{\left(-q^{a+1};q\right)_{2M+k} \left(q^{2};q^2\right)_{M-k} \left(q^3;q^3\right)_{k} },
\end{align*}
which gives that $j_{12,a,M} = 0$ for $M<0$ and $j_{12,a,0} = 1$.
We find that \eqref{E:H12J0Rec} and \eqref{E:H12J2Rec} are
equivalent to, respectively,
\begin{align}
\label{E:j120rec}
\left(1-q^{6M-2}\right)\left(1-q^{6M}\right)j_{12,0,M}	
&=
\left(1+q^4-q^{6M-3}-q^{6M-1}\right)j_{12,0,M-1}	
-q^4j_{12,0,M-1}	
,\\
\label{E:j122rec}
\left(1-q^{6M}\right)\left(1-q^{6M+4}\right)j_{12,2,M}	
&=
\left(1+q^2-q^{6M-1}-q^{6M+1}\right)j_{12,2,M-1}	
-q^2j_{12,2,M-1}	
.
\end{align}

Again, these recurrences may be proved with the $q$-Zeilberger algorithm.
Set
\begin{align*}
f_{12,a,k,M}
&:=
	\frac{q^{\frac12 k(3k+2a-1)}}
	{\left(-q;q\right)_{2M+k+a} \left(q^{2};q^2\right)_{M-k} \left(q^3;q^3\right)_{k} }
,\\
g_{12,0,k,M}
&:=
	\frac{q^{10}\left(q^{2M+2k-6}(1-q^2) + q^{4M} - q^{4M+3k-2}(1+q) + q^{6M+k+2}(1+q) - q^{6M+4k}\right)}
		 {\left(1+q^{2M+k+3}\right)\left(1+q^{2M+k+2}\right)\left(1+q^{2M+k+1}\right)\left(q^{2k}-q^{2M+2}\right)\left(q^{2M+4}-q^{2k}\right)}
	\\&\qquad\times
	\left(1-q^{3k}\right)f_{12,0,k,M}
,\\
g_{12,2,k,M}
&:=
	\frac{q^{6}\left(q^{4M+2} -  q^{4M+3k+3}(1+q) + q^{6M+k+6}(1+q) - q^{6M+4k+8}\right) \left(1-q^{3k}\right)}
		 {\left(1+q^{2M+k+5}\right)\left(1+q^{2M+k+4}\right)\left(1+q^{2M+k+3}\right)\left(q^{2M+2}-q^{2k}\right)\left(q^{2k}-q^{2M+4}\right)}
	f_{12,2,k,M}
,
\end{align*}
so that
\begin{align}
\label{E:J120WZ}
&\left(1-q^{6M+10}\right)\left(1-q^{6M+12}\right)f_{12,0,k,M+2}
-\left(1+q^4-q^{6M+9}-q^{6M+11}\right)f_{12,0,k,M+1}	
+q^4f_{12,0,k,M}	
\nonumber\\
&=
g_{12,0,k+1,M}-g_{12,0,k,M}
,\\
\label{E:J122WZ}
&\left(1-q^{6M+12}\right)\left(1-q^{6M+16}\right)f_{12,2,k,M+2}	
-\left(1+q^2-q^{6M+11}-q^{6M+13}\right)f_{12,2,k,M+1}	
+q^2f_{12,2,k,M}	
\nonumber\\
&=
g_{12,2,k+1,M}-g_{12,2,k,M}
.
\end{align}
We note that $g_{12,0,0,M}=g_{12,2,0,M}=0$ and $\lim_{k\rightarrow\infty}g_{12,0,k,M}=\lim_{k\rightarrow\infty}g_{12,2,k,M}=0$,
so that summing \eqref{E:J120WZ} over $k$ implies \eqref{E:j120rec} and
summing \eqref{E:J122WZ} over $k$  implies \eqref{E:j122rec}.

We apply Proposition \ref{Prop2} with
$a=-2, \ b=4, \ c=8, \ \alpha_0=1$, and $\alpha_1=0$ to $J_{12,0}(x)$ and
with $a=4, \ b=8, \ c=10, \ \alpha_0=1$, and $\alpha_1=0$ to $J_{12,2}(x)$, to obtain,
\begin{align*}
J_{12,0}(1)
&=
	\left(-q^5;q^6\right)_\infty {_2\phi_1}\left[\begin{matrix} q^{-1},q^3 \\ q^4\end{matrix};q^6,-q^5\right]
,&
J_{12,2}(1)
&=
	\left(-q^5;q^6\right)_\infty {_2\phi_1}\left[\begin{matrix} q^3,q^5 \\ q^{10}\end{matrix};q^6,-q^5\right]
.
\end{align*}
However, we do not have a reduction for $J_{12,3}(1)$ as a basic hypergeometric series.

\section{Concluding Remarks}
\label{S:Conc}

Noting that Proposition \ref{Prop1} easily gives an infinite product whenever $a \!=\!0$, one might ask whether this leads to more identities of the form \eqref{C:H1} -- \eqref{C:H9}. In particular, consider the $q$-difference equation from Proposition \ref{Prop1} with $a = 0$, namely
\begin{equation}
\label{E:a=0qDif}
H(x) = \left(1+xq^2\right) H\left(xq^2\right) + xq^b \left(1+xq^b\right) H\left(xq^4\right) + x^2 q^{2b+2} \left(1-xq^4\right) H\left(xq^6\right).
\end{equation}
Proposition \ref{Prop1} implies that
\begin{align*}
H(1) = \frac{\left(q^{3b}; q^{12}\right)_\infty}{\left(q^2, q^b; q^4\right)_\infty}.
\end{align*}
In order to determine when this corresponds to a triple sum of the shape found in $H_j$ with $1\leq j \leq 9$, we compare to Proposition \ref{Prop2}.

Suppose that
\begin{align*}
H(x) = \sum_{N\geq 0}  h_N \cdot q^{N^2 + mN} x^N.
\end{align*}
Then \eqref{E:a=0qDif} is equivalent to
\begin{align*}
\left(1 - q^{2N}\right) h_N & = \left(q^{1-m} + q^{2N+b-3-m}\right) h_{N-1}
+ \left(q^{2b-4-2m} + q^{2N+2b-6-2m}\right)h_{N-2} - q^{2b-3-3m}h_{N-3}.
\end{align*}
This matches \eqref{E:d=0rec} only when $m=1$ and $b=3$, which is $H_6$, and matches \eqref{E:d=-1rec} when $m=0$ and $b=1$, which is $H_7$. Furthermore, a short calculation shows that there are no other cases of this shape. In particular, in the proof of Proposition \ref{P:hNcdrec},
one can solve for $a, A_1(N),$ and $A_2(N)$ by comparing \eqref{E:hgeniter} and \eqref{E:hcdlongrec}, but this turns out to only be possible for the three cases stated in the proposition.

We have also found another conjectural sum-product identity for a series of the form \eqref{E:H1-9def}. After searching for related identities,
we observed computationally that
\begin{align*}
\sum_{i,j,k\ge0}\frac{(-1)^{k} q^{(i+2j+3k)(i+2j+3k-1)+3k^2+i-3j-3k}}{(q;q)_i(q^4;q^4)_j(q^6;q^6)_k}
&=
\frac{q^{-1}\left(1+q+q^2\right)}{(q^3;q^4)_\infty (q^4,q^5;q^{12})_\infty}.
\end{align*}
In fact, it is not difficult to make the connection with $H_9$ explicit.
In particular, we find that
\begin{align*}
\sum_{i,j,k\ge0}\frac{(-1)^{k} q^{(i+2j+3k)(i+2j+3k-1)+3k^2+i-3j-3k}}{(q;q)_i(q^4;q^4)_j(q^6;q^6)_k}x^{i+2j+3k}
&=
\sum_{N\ge0}h_{N,-2,-2}\cdot  q^{N^2} x^N.
\end{align*}
Using \eqref{E:h9recsystem} and \eqref{E:H9Rec}, it is then not hard  to deduce that the
series above is indeed $q^{-1}(1+q+q^2)H_9(1)$. As such, this sum-product identity is equivalent to \eqref{C:H9}.

\end{document}